\documentclass[10pt]{article}

\usepackage[a4paper,margin=2.9cm]{geometry}
\usepackage[english]{babel}
\selectlanguage{english}
\usepackage{parskip}
\usepackage{amssymb}
\usepackage{amsmath, amsthm, amssymb}
\usepackage{enumerate}
\usepackage{graphicx}
\usepackage{bbm} 
\usepackage{mathrsfs} 
\usepackage{subcaption}
\usepackage{verbatim}
\usepackage{hyperref}
\usepackage[percent]{overpic}

\makeatletter
\renewcommand\section{\@startsection{section}{1}{\z@}%
                                  {-3.5ex \@plus -1ex \@minus -.2ex}%
                                  {2.3ex \@plus.2ex}%
                                  {\normalfont\Large\bfseries}}
\makeatother

\newtheoremstyle{examplestyle}
  {4mm}
  {4mm}
  {\slshape}
  {0pt}
  {\bfseries}
  {\newline}
  {0mm}
  {}

\theoremstyle{examplestyle}
  \newtheorem{Theorem}{Theorem}[section]              

  \newtheorem{Lemma}[Theorem]{Lemma}
  
 \newtheorem*{Defs*}{Definitions}

  \newtheorem{Corollary}[Theorem]{Corollary}

\newtheorem{Conjecture}[Theorem]{Conjecture}
\newtheorem{Claim}[Theorem]{Claim}

\setcounter{tocdepth}{2} 


\title{Packing graphs of bounded codegree}

\author{Wouter Cames van Batenburg\thanks{Department of Mathematics, Radboud University Nijmegen, Postbus 9010, 6500 GL Nijmegen, The Netherlands. \href{mailto:w.camesvanbatenburg@math.ru.nl}{\nolinkurl{w.camesvanbatenburg@math.ru.nl}}, \href{mailto:ross.kang@gmail.com}{\nolinkurl{ross.kang@gmail.com}}.} \and Ross J. Kang\footnotemark[1]}

\begin{document}

\maketitle

\begin{abstract}
Two graphs $G_1$ and $G_2$ on $n$ vertices are said to \textit{pack} if there exist injective mappings of their vertex sets into $[n]$ such that the images of their edge sets are disjoint. A longstanding conjecture due to Bollob\'as and Eldridge and, independently, Catlin, asserts that, if $(\Delta_1(G)+1) (\Delta_2(G)+1) \le n+1$, then $G_1$ and $G_2$ pack. We consider the validity of this assertion under the additional assumption that $G_1$ or $G_2$ has bounded codegree. In particular, we prove for all $t \ge 2$ that, if $G_1$ contains no copy of the complete bipartite graph $K_{2,t}$ and $\Delta_1 > 17  t \cdot \Delta_2$, then $(\Delta_1(G)+1) (\Delta_2(G)+1) \le n+1$ implies that $G_1$ and $G_2$ pack.
We also provide a mild improvement if moreover $G_2$ contains no copy of the complete tripartite graph $K_{1,1,s}$, $s\ge 1$.
\end{abstract}

\section{Introduction}

Let $G_1$ and $G_2$ be graphs on $n$ vertices. (All graphs are assumed to have neither loops nor multiple edges.)
We say that $G_1$ and $G_2$ {\em pack} if there exist injective mappings of their vertex sets into $[n] = \{1,\dots,n\}$ so that their edge sets have disjoint images. Equivalently, $G_1$ and $G_2$ pack
if $G_1$ is a subgraph of the complement of $G_2$.
The {\em maximum codegree $\Delta^\wedge(G)$} of a graph $G$ is the maximum over all vertex pairs of their common degree, i.e.~$\Delta^\wedge(G) < t$ if and only if $G$ contains no copy of the complete bipartite graph $K_{2,t}$.
The {\em maximum adjacent codegree $\Delta^\vartriangle(G)$} of $G$ is the maximum over all pairs of {\em adjacent} vertices of their common degree, i.e.~$\Delta^\vartriangle(G) < s$ if and only if $G$ contains no copy of the complete tripartite graph $K_{1,1,s}$.
Clearly, $\Delta^\vartriangle(G) \le \Delta^\wedge(G)$ always.
We let $\Delta_1$ and $\Delta_2$ denote the maximum degrees of $G_1$ and $G_2$, respectively, and $\Delta^\wedge_1$ and $\Delta^\vartriangle_2$ the corresponding maximum (adjacent) codegrees.
We provide sufficient conditions for $G_1$ and $G_2$ to pack  in terms of $\Delta_1$, $\Delta_2$, $\Delta^\wedge_1$, $\Delta^\vartriangle_2$. 

For integers $t \ge 2$ and $\Delta_2 \ge 1$, we define
\begin{align*}
\alpha^*(t,\Delta_2):= \frac{1}{2}(2+ \gamma + \sqrt{4\gamma + \gamma^2}),\ \text{ where } \ \gamma=\frac{\Delta_2}{\Delta_2+1}\cdot\frac{t-1}{t}.
\end{align*}
Note $\alpha^*$ is the larger solution to the equation $(\alpha-1)^2 - \gamma \alpha=0$ and $\frac{1}{8}(9+\sqrt{17}) \le \alpha^* \le \frac{1}{2}(3+\sqrt{5})$.

\begin{Theorem}\label{thm:blue}
Let $G_1$ and $G_2$ be graphs on $n$ vertices with respective maximum degrees $\Delta_1$ and $\Delta_2$. Let $\Delta^\wedge_1$ be the maximum codegree of $G_1$.
Let $t\ge 2$ be an integer and let $\alpha > \alpha^*=\alpha^*(t,\Delta_2)$ and $0 < \epsilon <1/2$ be reals.
Then $G_1$ and $G_2$ pack
if $\Delta^\wedge_1<t$ and $n$ is larger than each of the following quantities:
\begin{align}
\left(t+\frac{\alpha (\alpha-1)}{(\alpha-1)^2-\alpha} \right) \cdot \Delta_2 + \Delta_1 \Delta_2, &\label{eqn:blue1}\\
(2\alpha t+2)\cdot \Delta_2 + ((2\alpha+1)t-1) \cdot \Delta_2^2 + (1-\epsilon) \cdot \Delta_1\Delta_2 , &\label{eqn:blue2}\\
 1 + \left( 2+ \frac{\epsilon}{1-2\epsilon}\right) \cdot \Delta_2 + \Delta_1 \Delta_2, & \ \text{ and}\label{eqn:blue3}\\
 \left( t + \frac{3 - \epsilon}{2}  \right) \cdot \Delta_2  + \frac{3-\epsilon}{2} (t-1) \cdot \Delta_2^2 + \frac{1+ \epsilon}{2} \cdot \Delta_1\Delta_2 \label{eqn:blue4}.
\end{align}
\end{Theorem}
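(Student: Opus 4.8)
The natural approach is a contradiction argument built on an extremal bijection together with a controlled sequence of transpositions (``switches''), in the spirit of Sauer--Spencer but refined using the codegree hypothesis. Identify $V(G_1)=V(G_2)=[n]$ and, for a bijection $\phi$ of $[n]$, call an unordered pair $\{a,b\}$ a \emph{conflict} of $\phi$ if $ab\in E(G_1)$ and $\phi(a)\phi(b)\in E(G_2)$; then $G_1$ and $G_2$ pack iff some $\phi$ has no conflict. Assuming the theorem fails, fix $\phi$ minimising, lexicographically, first the number of conflicts and then an auxiliary ``constrainedness'' potential $\sum_v w(v)$ (for instance, $w(v)$ records the images of $G_1$-neighbours of $v$ lying in $N_{G_2}[\phi(v)]$, or a thresholded version thereof). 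Since no $\phi$ is conflict-free, there is a conflict $\{a,b\}$, and the goal is to produce from $\phi$ a bounded-length sequence of transpositions strictly lowering the potential, contradicting minimality. First I would record the local bookkeeping: a transposition $(a\,c)$ alters conflicts only on $G_1$-edges incident with $a$ or $c$, and one writes down exactly when it is \emph{safe} (creates no conflict) and when it is \emph{profitable} (destroys $\{a,b\}$); a direct count over $N_{G_1}(a)$, $N_{G_1}(c)$ and second $G_2$-neighbourhoods bounds the ``bad'' partners $c$ by roughly $2\Delta_1\Delta_2$ plus lower-order terms, which alone only yields a Sauer--Spencer-type bound.

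To push the coefficient of $\Delta_1\Delta_2$ down towards $1$, two ingredients are combined. First, one allows a \emph{double switch}: relocate $a$ to a position that is good for $a$ but whose occupant $c$ may acquire a single new conflict, then remove that new conflict by a second transposition near $c$; this shares the $\Delta_1\Delta_2$-sized obstruction between the two steps rather than doubling it, at the price of extra $\Theta(t\Delta_2^2)$ terms from second $G_2$-neighbourhoods of images of $G_1$-neighbours (the factor $t$ entering through the number of such neighbourhoods that must be avoided simultaneously). Second, one splits $V(G_1)$ into \emph{heavy} vertices --- those in more than a threshold $\theta=\theta(\alpha,t,\Delta_2)$ conflicts, equivalently whose image sees more than $\theta$ images of $G_1$-neighbours in its $G_2$-neighbourhood --- and \emph{light} vertices, and chooses switch partners among the light ones. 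Here $\Delta^\wedge_1<t$ is essential: any two vertices of $G_1$ have fewer than $t$ common $G_1$-neighbours, so conflict edges cannot pile up on a small vertex set, which both bounds the number of heavy vertices and bounds the overlap of the bad sets heavy vertices contribute around $a$. The threshold $\theta$ is tuned so that the cost of avoiding heavy obstructions equals the cost of avoiding light ones; this balance point is precisely the larger root $\alpha^{*}$ of $(\alpha-1)^{2}-\gamma\alpha=0$, with $\gamma=\frac{\Delta_2}{\Delta_2+1}\cdot\frac{t-1}{t}$ packaging the $G_2$-degree normalisation $\frac{\Delta_2}{\Delta_2+1}$ and the $G_1$-codegree normalisation $\frac{t-1}{t}$. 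Taking $\alpha>\alpha^{*}$ gives genuine slack, and $\epsilon\in(0,\tfrac12)$ supplies a second, independent slack used when the two halves of a double switch are charged asymmetrically.

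The four displayed inequalities then correspond to the four outcomes of the case analysis of which repair succeeds. Bound~\eqref{eqn:blue1}, carrying the full $\Delta_1\Delta_2$ with the correction $\bigl(t+\tfrac{\alpha(\alpha-1)}{(\alpha-1)^2-\alpha}\bigr)\Delta_2$, is the case where $a$ moves directly to a good position whose occupant is light, the fraction $\tfrac{\alpha(\alpha-1)}{(\alpha-1)^2-\alpha}$ (positive exactly when $\alpha>\tfrac{3+\sqrt5}{2}\ge\alpha^{*}$, the regime where this bound binds) measuring the overhead of isolating heavy obstructions. Bound~\eqref{eqn:blue3}, also carrying the full $\Delta_1\Delta_2$ but with the smaller correction $\bigl(2+\tfrac{\epsilon}{1-2\epsilon}\bigr)\Delta_2$, is the analogous case where the profit instead comes from the auxiliary vertex being light, drawing on the $\epsilon$-slack. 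Bounds~\eqref{eqn:blue2} and~\eqref{eqn:blue4}, with reduced coefficients $(1-\epsilon)$ and $\tfrac{1+\epsilon}{2}$ on $\Delta_1\Delta_2$ but enlarged $\Theta(t\Delta_2^2)$ terms, are the genuine double-switch cases: in~\eqref{eqn:blue2} an $\epsilon$-fraction of the $\Delta_1\Delta_2$ budget is discarded via heavy vertices, and in~\eqref{eqn:blue4} the budget splits roughly evenly between the two transpositions, the $((2\alpha+1)t-1)\Delta_2^2$ and $\tfrac{3-\epsilon}{2}(t-1)\Delta_2^2$ terms being the second-order $G_2$-neighbourhood counts. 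Since $\phi$ violates none of~\eqref{eqn:blue1}--\eqref{eqn:blue4}, in every case a safe-and-profitable move exists, contradicting the choice of $\phi$.

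The hard part will be the double-switch bookkeeping: one must verify that after relocating $a$ the at-most-one new conflict can always be eliminated by a second transposition, chosen from a large enough pool, without re-creating a conflict at $a$ or anywhere the first switch touched --- and that the two candidate pools (for the first and second steps) are simultaneously large, which is exactly what forces both the splitting of the $\Delta_1\Delta_2$ budget and the appearance of the $t\Delta_2^2$ terms. Equally delicate is arranging the heavy/light threshold and the two slack parameters so that, whatever case the extremal $\phi$ lands in, at least one of the four hypotheses on $n$ is contradicted; getting these to line up is precisely what pins $\alpha^{*}$ to the root of $(\alpha-1)^2=\gamma\alpha$ and dictates the exact coefficients. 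By contrast, the purely local Sauer--Spencer counts and the $K_{2,t}$-free double counting bounding the number and spread of heavy vertices should be routine once the extremal configuration and the threshold are fixed.
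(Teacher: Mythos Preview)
Your overall shape --- contradiction via an extremal near-packing, then local swaps --- is right, but the mechanism you attribute to the parameters $\alpha$ and $\epsilon$ and your reading of what the four bounds correspond to are not what actually drives the argument, and I do not see how your version would produce the specific constants.

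The paper does not minimise a lexicographic potential over bijections. It takes $G_2$ edge-minimal, which yields a labelling with a \emph{single} purple edge $uv$, and then performs a static analysis of the neighbourhood structure around $u$ (and $v$). The relevant sets are
\[
A^*(u)=N_2(N_1(u))\setminus\bigl(N_2(u)\cup N_1(N_2(u))\bigr),\qquad
B(u)=N_1(N_2(u))\setminus\bigl(N_1(u)\cup N_2(u)\cup N_2(N_1(u))\bigr),
\]
and the only swap needed beyond a transposition is the $3$-cycle $(u,a,b)$ with $a\in A^*(u)$, $b\in B(u)$; minimality forces a red--blue link from every such $a$ to every such $b$. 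There is no heavy/light classification of vertices.

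The parameter $\alpha$ is not a heavy/light threshold. It is the threshold for $|A^*(u)|$: Case~(i) is $|A^*(u)|\ge\alpha t\,\Delta_2(\Delta_2{+}1)$. In that regime each $b\in B(u)$ has at least $k=|A^*(u)|/\Delta_2-t(\Delta_2{+}1)$ blue neighbours inside $A^*(u)$, and since $\Delta^\wedge_1<t$ these blue neighbourhoods pairwise intersect in at most $t-1$ points. \emph{Corr\'adi's lemma} then upper-bounds $|B(u)|$; the hypothesis $k^2>(t-1)|A^*(u)|$ needed for that bound reads $(\beta-1)^2>\gamma\beta$ with $\beta=|A^*(u)|/(t\Delta_2(\Delta_2{+}1))$, and \emph{this} is where $\alpha^*$ comes from --- not from balancing heavy against light obstruction costs. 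Plugging the resulting $|B(u)|$ bound into $n\le|N_1^*(u)|+|N_2(u)|+|B(u)|+|N_2(N_1(u))|$ gives~\eqref{eqn:blue1}.

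The parameter $\epsilon$ is likewise structural, not a charging slack. When Case~(i) fails one asks whether some red edge $uv$ has $|N_2(u)\cap N_2(v)|<(1-\epsilon)\Delta_2$. If yes (Case~(ii)), the $K_{2,t}$-freeness of $G_1$ bounds $|N_1(N_2(u))\cap N_1(N_2(v))|$ and one covers $[n]$ by $A^*(u)\cup A^*(v)$ together with this intersection to obtain~\eqref{eqn:blue2}. If no (Case~(iii)), then \emph{every} red edge has $|N_2(x)\cap N_2(y)|\ge(1-\epsilon)\Delta_2$, and this global rigidity of $G_2$ is exploited twice: once to show that a mildly large $|A^*(u)|$ already forces $|B(u)|\le(t-1)\Delta_2^2$, and once to show that most blue neighbours of $u$ pair up via red edges, roughly halving $|N_2(N_1(u))|$. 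Taking the minimum of the two resulting covering bounds on $n$ and then the maximum over the sub-threshold for $|A^*(u)|$ yields \emph{both}~\eqref{eqn:blue3} and~\eqref{eqn:blue4} from this single case; they are not two separate repair mechanisms.

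Your double-switch/heavy--light scheme does not obviously reproduce either the Corr\'adi step or the ``all red edges have dense red common neighbourhood'' dichotomy, and without those the precise coefficients $\tfrac{\alpha(\alpha-1)}{(\alpha-1)^2-\alpha}$, $(2\alpha+1)t-1$, $\tfrac{\epsilon}{1-2\epsilon}$, $\tfrac{3-\epsilon}{2}$ will not fall out.
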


\begin{Theorem}\label{thm:bluered}
Let $G_1$ and $G_2$ be graphs on $n$ vertices with respective maximum degrees $\Delta_1$ and $\Delta_2$. Let $\Delta^\wedge_1$ be the maximum codegree of $G_1$ and $\Delta^\vartriangle_2$ the maximum adjacent codegree of $G_2$.
Let $s\ge1$ and $t\ge 2$ be integers and let $\alpha > \alpha^*=\alpha^*(t,\Delta_2)$ be real.
Then $G_1$ and $G_2$ pack
if $\Delta^\wedge_1<t$, $\Delta^\vartriangle_2<s$, and $n$ is larger than both of the following quantities:
\begin{align}
\left(t+\frac{\alpha (\alpha-1)}{(\alpha-1)^2-\alpha} \right) \cdot \Delta_2 + \Delta_1 \Delta_2 & \ \text{ and} \label{eqn:bluered1}\\
 (2 + 2\alpha t) \cdot \Delta_2 + (s-1) \cdot \Delta_1 + \left( (2\alpha+1)t-1\right) \cdot \Delta_2^2 &   \label{eqn:bluered2}.
\end{align}
\end{Theorem}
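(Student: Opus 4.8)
The plan is to follow the architecture of the proof of Theorem~\ref{thm:blue}, since the two statements share the bound \eqref{eqn:bluered1}$=$\eqref{eqn:blue1} verbatim and differ only in that the three $\epsilon$-balanced bounds \eqref{eqn:blue2}--\eqref{eqn:blue4} are to be replaced by the single bound \eqref{eqn:bluered2}. In outline: realise the desired packing as a bijection $\sigma\colon V(G_1)\to V(G_2)$ with no \emph{conflicting} edge (no $xy\in E(G_1)$ with $\sigma(x)\sigma(y)\in E(G_2)$). Build $\sigma$ by an iterative embedding — placing the vertices of $G_1$ into the slots $V(G_2)$ in a carefully chosen order, greedily while this is safe, and finishing the last batch through a Hall-type matching in the bipartite ``compatibility'' graph between unplaced vertices and free slots — with local re-routing (moving an already-placed vertex, and the vertex it displaces, to fresh slots) whenever a vertex cannot be placed. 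Each phase imposes one lower bound on $n$, forcing the relevant set of forbidden slots to stay smaller than $n$; the several phases are what produce the several displayed quantities, and equivalently one may run a minimal-conflict potential argument.

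For the counting I would fix a conflict, or a Hall violator, centred at a vertex $x$ with image $a$ and some already-handled $G_1$-neighbour $y$ with image $b$, so that $ab\in E(G_2)$. The forbidden slots $c=\sigma(z)$ are of three kinds: (i) $z$ in a bounded $G_1$-neighbourhood of $x$ or $y$ (order $\Delta_1$, absorbed into the lower-order terms); (ii) $c$ is $G_2$-adjacent to the image of a $G_1$-neighbour of $x$ — at most $\deg_{G_1}(x)\cdot\Delta_2$; and (iii) re-placing the displaced vertex $z$ at $a$ (or at $b$) conflicts, i.e.\ $z$ has a $G_1$-neighbour whose image lies in $N_{G_2}(a)$ (or in $N_{G_2}(a)\cap N_{G_2}(b)$). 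The codegree bound $\Delta^\wedge_1<t$ controls kinds (ii)--(iii) when $x$, or a preimage involved, has unusually large $G_1$-degree: since $G_1$ is $K_{2,t}$-free, $\sum_v\binom{\deg_{G_1}v}{2}<(t-1)\binom n2$, so such vertices are rare and any two of them have fewer than $t$ common neighbours, so the obstructions they generate overlap. Splitting at a $G_1$-degree threshold proportional to $\alpha$ and balancing the resulting linear-in-$\alpha$ baseline against a quadratic-in-$\alpha$ codegree correction is exactly what yields the equation $(\alpha-1)^2=\gamma\alpha$ defining $\alpha^*$ and the coefficient $\alpha(\alpha-1)/((\alpha-1)^2-\alpha)$ in \eqref{eqn:bluered1}; this step should be lifted unchanged from the proof of Theorem~\ref{thm:blue}.

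The one genuinely new point — and what lets \eqref{eqn:blue2}--\eqref{eqn:blue4} be merged into \eqref{eqn:bluered2} and the parameter $\epsilon$ dropped — is the hypothesis $\Delta^\vartriangle_2<s$. Its role is that in kind~(iii) the two endpoints $a,b$ of the conflicting image are \emph{adjacent} in $G_2$, so $|N_{G_2}(a)\cap N_{G_2}(b)|<s$; consequently the relevant obstruction set in the hardest phase is contained in $\bigcup_{w\in N_{G_2}(a)\cap N_{G_2}(b)}N_{G_1}(\sigma^{-1}(w))$, of size less than $(s-1)\Delta_1$, in place of the $\Theta(\Delta_1\Delta_2)$ that the $\epsilon$-weighted bounds of Theorem~\ref{thm:blue} were paying. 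Adding the kind-(i) terms (order $\Delta_2$) and the kind-(ii)/threshold terms (order $t\Delta_2+t\Delta_2^2$, with the $\alpha$-dependence from the heavy-vertex split) then reproduces precisely the right-hand side of \eqref{eqn:bluered2}.

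I expect the crux not to be any single estimate but the organisation of the argument so that the two $\Theta(\Delta_1\Delta_2)$ contributions — (ii) and the ``$N_{G_2}(a)$''-version of (iii) — never both have to be paid at once (a naive single swap only gives the classical $n>2\Delta_1\Delta_2$). Getting down to the near-optimal $n>\Delta_1\Delta_2+O(\Delta_2)$ of \eqref{eqn:bluered1} requires choosing the displaced vertex, and whether to push it to $a$ or to $b$ (or along a short chain), in coordination with the light/heavy status of $x$, $y$ and $z$ in $G_1$, and then checking that the union of all forbidden sets in each phase is genuinely smaller than $n$. Once that skeleton — borrowed from Theorem~\ref{thm:blue} — is in place, the adjacent-codegree refinement is essentially a drop-in substitution.
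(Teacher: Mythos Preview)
Your proposal does not match the paper's approach, and more fundamentally it misidentifies what the proof of Theorem~\ref{thm:blue} actually is. The paper does \emph{not} build the packing by iterative embedding with Hall-type matching and re-routing; there is no greedy placement, no phases, no $G_1$-degree threshold ``proportional to $\alpha$'', and no use of $\sum_v\binom{\deg_{G_1} v}{2}<(t-1)\binom{n}{2}$. Instead, both theorems are proved by contradiction via a \emph{critical counterexample}: take $(G_1,G_2)$ non-packable with $G_2$ edge-minimal, so that for any edge $e=uv\in E(G_2)$ there is a labelling with $uv$ the \emph{unique} purple edge. One then analyses the link structure from $u$ (and $v$) using swap arguments (Lemma~\ref{lem:swap}, Claims~\ref{clm:links1}--\ref{clm:links2}), the sets $A^*(u)$, $B(u)$, $N_1^*(u)$, and the covering inequalities of Claim~\ref{clm:UpperBoundsN}. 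The parameter $\alpha$ is \emph{not} a degree threshold: it is a threshold on the ratio $|A^*(u)|/(t\Delta_2(\Delta_2+1))$, and the coefficient $\alpha(\alpha-1)/((\alpha-1)^2-\alpha)$ arises from applying Corr\'adi's lemma (Corollary~\ref{CorCor}) to the sets $N_1(b)\cap A^*(u)$, $b\in B(u)$, to bound $|B(u)|$ when $|A^*(u)|$ is large (Case~\ref{case:blue1} of Theorem~\ref{thm:blue}).

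For Theorem~\ref{thm:bluered} specifically, the paper's proof has only two cases. Case~(i), $|A^*(u)|\ge\alpha t\,\Delta_2(\Delta_2+1)$ (or the same for $v$), is literally Case~\ref{case:blue1} of Theorem~\ref{thm:blue} and gives~\eqref{eqn:bluered1}. Case~(ii) is the complement: both $|A^*(u)|$ and $|A^*(v)|$ are small, and one plugs into Claim~\ref{clm:UpperBoundsN}\ref{clm:UpperBoundsN3} the estimate on $|N_1(N_2(u))\cap N_1(N_2(v))|$ from Case~\ref{case:blue2}, \emph{except} that the hypothesis $|N_2(u)\cap N_2(v)|<(1-\epsilon)\Delta_2$ is replaced by $|N_2(u)\cap N_2(v)|<s$, valid directly since $uv\in E(G_2)$ and $\Delta_2^\vartriangle<s$. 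That single substitution yields~\eqref{eqn:bluered2} and is the entire novelty; Case~\ref{case:blue3} and the parameter $\epsilon$ simply disappear. Your intuition that the adjacent-codegree hypothesis turns an $\epsilon$-weighted $\Theta(\Delta_1\Delta_2)$ term into $(s-1)\Delta_1$ is correct in spirit, but the surrounding machinery you sketch is not what is used, and as written your proposal does not constitute a proof.
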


For better context, we compare Theorems~\ref{thm:blue} and~\ref{thm:bluered} to a line of work on graph packing that was initiated in the 1970s~\cite{BoEl78,Cat74,Cat76,SaSp78}.
The following is a central problem in the area.

\begin{Conjecture}[Bollob\'as and Eldridge~\cite{BoEl78} and Catlin~\cite{Cat76}]
Let $G_1$ and $G_2$ be graphs on $n$ vertices with respective maximum degrees $\Delta_1$ and $\Delta_2$. Then $G_1$ and $G_2$ pack if $(\Delta_1 + 1) (\Delta_2 +1) \le n+1$. 
\end{Conjecture}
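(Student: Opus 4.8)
Since this conjecture is open, what follows is not a proof but a sketch of the line of attack I would take, with Theorems~\ref{thm:blue} and~\ref{thm:bluered} as one ingredient. Assume without loss of generality that $\Delta_1 \ge \Delta_2$, and divide the parameter space into three regimes: the \emph{sparse} regime $2\Delta_1\Delta_2 < n$, which is already settled by the classical theorem of Sauer and Spencer~\cite{SaSp78}; the \emph{unbalanced} regime $2\Delta_1\Delta_2 \ge n$ with $\Delta_1 \gg \Delta_2$; and the \emph{balanced} regime $\Delta_1 \asymp \Delta_2$, where necessarily $\Delta_1,\Delta_2 = \Theta(\sqrt{n})$. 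The first is done, so the work is in the other two.

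In the unbalanced regime the codegree of $G_1$ is the pivotal parameter. When $\Delta^\wedge_1 < t$ with $t$ small relative to $\Delta_1/\Delta_2$, Theorem~\ref{thm:blue} already produces a packing: its thresholds~\eqref{eqn:blue1}--\eqref{eqn:blue4} are all bounded by $\Delta_1\Delta_2 + O(t\,\Delta_2^2)$, and the ones carrying the $\Delta_2^2$ term come with a coefficient strictly below $1$ on $\Delta_1\Delta_2$, so the hypothesis $n \ge (\Delta_1+1)(\Delta_2+1)-1 = \Delta_1\Delta_2 + \Delta_1 + \Delta_2$ clears all four once $\Delta_1$ is a sufficiently large multiple of $t\Delta_2$; if moreover $\Delta^\vartriangle_2 < s$ with $s$ small relative to $\Delta_1/\Delta_2^2$, one uses Theorem~\ref{thm:bluered} instead. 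The open part of this regime is therefore exactly when $G_1$ has \emph{large} codegree, i.e.\ contains $K_{2,t}$ for $t$ of order $\Delta_1$. Then $G_1$ is locally forced to look like a union of large bicliques, and I would exploit this rigidity in a greedy embedding of $G_1$ into $\overline{G_2}$: a vertex whose neighbourhood is heavily clustered obstructs fewer future placements, because the images already forbidden for it by distinct placed neighbours coincide. A discharging argument on a partial embedding — charging to each vertex the number of future placements it kills, and bounding the total by $n$ — is the natural vehicle.

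The chief obstacle is the balanced regime, which is precisely where the conjecture is tight: Bollob\'as and Eldridge~\cite{BoEl78} exhibited, for all $\Delta_1$ and $\Delta_2$, non-packing pairs with $(\Delta_1+1)(\Delta_2+1) = n+2$, built essentially from disjoint unions of complete graphs of order $\Theta(\sqrt{n})$. No purely local or greedy argument is known to reach the threshold here; the best general results lose a constant factor (they assume $\Delta_1\Delta_2 \le n/c$ for some $c>1$) or require one of $\Delta_1,\Delta_2$ to be constant. Closing the gap would, I expect, demand a global structural argument: via a stability analysis of the extremal configurations — or the regularity/absorption machinery developed by Csaba, Shokoufandeh and Szemer\'edi for bounded $\Delta_2$ — decompose each of $G_1$ and $G_2$ into a ``dense, clique-like'' part and a sparse remainder, pack the remainders by Sauer--Spencer, and align the clique-like parts using the fact that a disjoint union of complete graphs packs comfortably against the complement of another. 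Making such a decomposition robust enough to handle \emph{every} pair with $\Delta_1 \asymp \Delta_2$, and not merely the exact extremal shapes, is the crux — and it is exactly there that the local, codegree-based methods of this paper do not by themselves suffice.
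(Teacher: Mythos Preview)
The statement is a \emph{conjecture}, not a theorem proved in the paper; the paper merely states it as motivation and proves partial results (Theorems~\ref{thm:blue} and~\ref{thm:bluered}) under additional codegree hypotheses. You correctly recognise this and explicitly present a research outline rather than a proof, so there is no ``paper's proof'' to compare against and no claim of correctness to assess in the usual sense.

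That said, your outline deserves a couple of cautions. First, your trichotomy does not exhaust the parameter space cleanly: between ``$\Delta_1 \gg \Delta_2$ with small codegree'' (where Theorem~\ref{thm:blue} applies) and ``$\Delta_1 \asymp \Delta_2$'' there is a wide intermediate range --- say $\Delta_1 = \Delta_2^{1+o(1)}$ with $\Delta^\wedge_1$ unbounded --- that your sketch does not address, and the assertion that large codegree forces $G_1$ to ``look like a union of large bicliques'' is false in general (a random bipartite graph with suitable edge density has $\Delta^\wedge_1$ of order $\Delta_1$ but no large biclique). Second, the discharging idea for the high-codegree unbalanced case is too vague to evaluate: the phrase ``charging to each vertex the number of future placements it kills'' describes the trivial bound $\Delta_1\Delta_2$ on obstructions, which is exactly the Sauer--Spencer argument and gains nothing from the codegree hypothesis. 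So while your regime decomposition is a sensible way to organise the problem, the substantive content beyond what the paper already proves is speculative, and you are right to flag the balanced regime as the genuine obstacle.
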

If true, the statement would be sharp and would significantly generalise a celebrated result of Hajnal and Szemer\'edi~\cite{HaSz70} on equitable colourings.
Sauer and Spencer~\cite{SaSp78} showed that $2 \Delta_1 \Delta_2 < n$ is a sufficient condition for $G_1$ and $G_2$ to pack, which is seen to be sharp when one of the graphs is a perfect matching.
Thus far the Bollob\'as--Eldridge--Catlin (BEC) conjecture has been confirmed in the following special cases: $\Delta_1 = 2$~\cite{AiBr93}; $\Delta_1 = 3$ and $n$ sufficiently large~\cite{CSS03}; $G_1$ bipartite and $n$ sufficiently large~\cite{Csa07}; and $G_1$ $d$-degenerate, $\Delta_1 \ge 40 d$ and $\Delta_2 \ge 215$~\cite{BKN08}.
Moreover, an approximate BEC condition, $(\Delta_1 + 1) (\Delta_2 +1) \le 3n/5+1$, is sufficient for $G_1$ and $G_2$ to pack, provided that $\Delta_1,\Delta_2\ge 300$~\cite{KKY08}.
Theorem \ref{thm:blue} implies the following.

\begin{Corollary}\label{cor:blue0}
Let $G_1, G_2, \Delta_1$,$\Delta_2$ and $\Delta^\wedge_1$ be as before. Let $t\ge 2$ be an integer. Then $G_1$ and $G_2$ pack if $\Delta_1 \Delta_2 + \Delta_1 \leq n+1$ and $\Delta^\wedge_1 < t$ and $\Delta_1 > 17 t \cdot \Delta_2$.
\begin{proof}
Choose $\epsilon = (2t-2)/(4t-3)$ and $\alpha=3$ in Theorem~\ref{thm:blue}. Using that $\Delta_1 > 17 t \Delta_2> \frac{(4t-3)(7t-1)}{2t-2} \cdot \Delta_2$, it follows that 
$\max(\eqref{eqn:blue1},\eqref{eqn:blue2},\eqref{eqn:blue3},\eqref{eqn:blue4}) \le (\Delta_1+1)(\Delta_2+1)-1  \le  n$. So $G_1$ and $G_2$ pack.
\end{proof}
\end{Corollary}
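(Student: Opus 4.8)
The plan is to invoke Theorem~\ref{thm:blue} with a single, $\Delta_2$-independent choice of the free parameters. Since the excerpt records $\alpha^*(t,\Delta_2)\le\tfrac12(3+\sqrt5)<3$, the value $\alpha=3$ is admissible for all $t\ge2$ and all $\Delta_2$; it is convenient because then $(\alpha-1)^2-\alpha=1$, hence $\tfrac{\alpha(\alpha-1)}{(\alpha-1)^2-\alpha}=6$, $2\alpha t+2=6t+2$, and $(2\alpha+1)t-1=7t-1$. For $\epsilon$ I would take $\epsilon=(2t-2)/(4t-3)$: this lies in $(0,1/2)$ exactly because $t\ge2$ (since $4t-4<4t-3$), and it is chosen so that $1-2\epsilon=1/(4t-3)$, whence $\epsilon/(1-2\epsilon)=2t-2$ and $2+\epsilon/(1-2\epsilon)=2t$. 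With these substitutions the four bounds of Theorem~\ref{thm:blue} become explicit: \eqref{eqn:blue1} reads $(t+6)\Delta_2+\Delta_1\Delta_2$, \eqref{eqn:blue3} reads $1+2t\Delta_2+\Delta_1\Delta_2$, while \eqref{eqn:blue2} and \eqref{eqn:blue4} each take the shape $A\Delta_2+B\Delta_2^2+C\Delta_1\Delta_2$ with leading coefficient $C<1$.

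The hypothesis $\Delta_1\Delta_2+\Delta_1\le n+1$ says $n\ge\Delta_1\Delta_2+\Delta_1-1$, so it suffices to show each of the four quantities is strictly below $\Delta_1\Delta_2+\Delta_1-1$. For \eqref{eqn:blue1} and \eqref{eqn:blue3} one cancels the $\Delta_1\Delta_2$ term outright and is left with $(t+6)\Delta_2+1<\Delta_1$ and $2t\Delta_2+2<\Delta_1$ respectively. For \eqref{eqn:blue2} and \eqref{eqn:blue4} one cancels only the coefficient-$1$ part of $\Delta_1\Delta_2$ and uses the surviving positive fraction $(1-C)\Delta_1\Delta_2$ --- this is $\epsilon\Delta_1\Delta_2$ for \eqref{eqn:blue2} and $\tfrac{2t-1}{8t-6}\Delta_1\Delta_2$ for \eqref{eqn:blue4} --- to dominate the $B\Delta_2^2$ term; after dividing by $\Delta_2$ this again reduces to a linear lower bound on $\Delta_1$. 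Of the four resulting requirements the strongest comes from the $\Delta_2^2$ term of \eqref{eqn:blue2}, namely $\Delta_1>\tfrac{(4t-3)(7t-1)}{2t-2}\,\Delta_2$; the others (essentially $\Delta_1>(t+6)\Delta_2$, $\Delta_1>2t\Delta_2$, and $\Delta_1>\tfrac{(10t-7)(t-1)}{2t-1}\Delta_2$) are all weaker.

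It then remains to verify that $\Delta_1>17t\cdot\Delta_2$ implies this strongest bound, i.e.\ that $17t(2t-2)\ge(4t-3)(7t-1)$; expanding, this is $6t^2-9t-3\ge0$, equivalently $2t^2-3t-1\ge0$, which holds for every $t\ge2$ (the larger root of $2t^2-3t-1$ is $\tfrac14(3+\sqrt{17})<2$), and the strictness of $\Delta_1>17t\Delta_2$ then delivers the strict inequalities required above. Hence all hypotheses of Theorem~\ref{thm:blue} hold and $G_1$ and $G_2$ pack. I expect no conceptual difficulty here: the one genuine choice is the value of $\epsilon$, and it involves a trade-off --- enlarging $\epsilon$ toward $1/2$ relaxes \eqref{eqn:blue2} and \eqref{eqn:blue4} but worsens \eqref{eqn:blue3} through the diverging factor $\epsilon/(1-2\epsilon)$, and $\epsilon=1/2$ is forbidden outright --- so one wants the largest $\epsilon$ still comfortably absorbed by the slack in $\Delta_1>17t\Delta_2$; the value $(2t-2)/(4t-3)$ is such a sweet spot, and a finer optimisation would let one shave the constant $17$ down a little.
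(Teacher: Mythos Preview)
Your proposal is correct and follows exactly the paper's route: the same choices $\alpha=3$ and $\epsilon=(2t-2)/(4t-3)$, leading to the same dominant constraint $\Delta_1>\frac{(4t-3)(7t-1)}{2t-2}\Delta_2$ and the same verification that $17t$ majorises this coefficient via $2t^2-3t-1\ge0$. If anything, you are more careful than the paper in matching the bound to the stated hypothesis $\Delta_1\Delta_2+\Delta_1\le n+1$ rather than the BEC form $(\Delta_1+1)(\Delta_2+1)\le n+1$.
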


The following results concerning the BEC-conjecture follow immediately.
\begin{Corollary}\label{cor:blue}
Given an integer $t\ge 2$, the BEC conjecture holds under the additional condition that the maximum codegree $\Delta^\wedge_1$ of $G_1$ is less than $t$ and $\Delta_1 > 17 t \cdot \Delta_2$.
\end{Corollary}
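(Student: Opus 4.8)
The plan is to deduce this immediately from Corollary~\ref{cor:blue0}. Recall that the BEC conjecture asserts that $G_1$ and $G_2$ pack whenever $(\Delta_1+1)(\Delta_2+1) \le n+1$; so, fixing an integer $t \ge 2$, it suffices to verify the packing conclusion under the three hypotheses $(\Delta_1+1)(\Delta_2+1) \le n+1$, $\Delta^\wedge_1 < t$, and $\Delta_1 > 17 t \cdot \Delta_2$.

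The only thing to check is that these hypotheses feed into Corollary~\ref{cor:blue0}. Expanding the BEC hypothesis gives $\Delta_1 \Delta_2 + \Delta_1 + \Delta_2 + 1 \le n+1$, and discarding the nonnegative term $\Delta_2 + 1$ on the left leaves $\Delta_1 \Delta_2 + \Delta_1 \le n - \Delta_2 \le n+1$. This is precisely the numerical hypothesis of Corollary~\ref{cor:blue0}, while the two conditions $\Delta^\wedge_1 < t$ and $\Delta_1 > 17 t \cdot \Delta_2$ are carried over verbatim. Corollary~\ref{cor:blue0} then gives that $G_1$ and $G_2$ pack.

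There is no genuine obstacle here: the statement is a strict weakening of Corollary~\ref{cor:blue0}, obtained by strengthening its numerical hypothesis $\Delta_1\Delta_2 + \Delta_1 \le n+1$ to the BEC hypothesis $(\Delta_1+1)(\Delta_2+1)\le n+1$. All of the actual work is already contained in Theorem~\ref{thm:blue} and in the parameter choices ($\epsilon = (2t-2)/(4t-3)$, $\alpha = 3$) made in the proof of Corollary~\ref{cor:blue0}.
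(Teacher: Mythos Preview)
Your proof is correct and matches the paper's approach: the paper simply states that Corollary~\ref{cor:blue} ``follow[s] immediately'' from Corollary~\ref{cor:blue0}, and you have spelled out that immediate deduction by noting that the BEC hypothesis $(\Delta_1+1)(\Delta_2+1)\le n+1$ implies the weaker hypothesis $\Delta_1\Delta_2+\Delta_1\le n+1$ of Corollary~\ref{cor:blue0}. (Minor quibble: when you say ``discarding the nonnegative term $\Delta_2+1$ on the left,'' the resulting bound is $\Delta_1\Delta_2+\Delta_1\le n+1$ directly; the intermediate $n-\Delta_2$ comes from \emph{moving} the term to the right rather than discarding it --- either way the conclusion holds.)
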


We were unable to avoid the linear dependence on $\Delta_2$ in the lower bound condition on $\Delta_1$.
Although we have not seriously attempted to optimise the factor $17 t$ above, Theorem~\ref{thm:bluered} improves on this factor  under the additional assumption that $\Delta^\vartriangle_2$ is bounded, as exemplified by the following corollary.

\begin{Corollary}\label{cor:bluered}
Given an integer $t\ge 2$, the BEC conjecture holds under the additional condition that the maximum codegree $\Delta^\wedge_1$ of $G_1$ is less than $t$, $G_2$ is triangle-free, and $\Delta_1 > (4+\sqrt{5}) t \cdot \Delta_2$.
\begin{proof}
Choose $\alpha=\frac{1}{4t}(6t+1 + \sqrt{20 t^2 + 4t+1})$ and $s=1$ in Theorem~\ref{thm:bluered}. Using that $t+ \frac{\alpha (\alpha-1)}{(\alpha-1)^2-\alpha} -1 = (2\alpha+1) t -1$ and that $\Delta_1 > (4+\sqrt{5}) t \cdot \Delta_2 > ((2 \alpha +1) t-1) \cdot \Delta_2$, it follows that $\max(\eqref{eqn:bluered1},\eqref{eqn:bluered2})  \le (\Delta_1+1)(\Delta_2+1)-1  \le  n$. So $G_1$ and $G_2$ pack.
\end{proof}
\end{Corollary}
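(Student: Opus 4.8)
\textbf{Proof proposal (Corollary~\ref{cor:bluered}).} The plan is to obtain the statement directly from Theorem~\ref{thm:bluered} by a careful choice of the two free parameters $s$ and $\alpha$. Since $G_2$ is triangle-free exactly when it contains no $K_{1,1,1}$, we have $\Delta^\vartriangle_2 < 1$ and may take $s=1$; this eliminates the term $(s-1)\cdot\Delta_1$ from \eqref{eqn:bluered2}. It then remains to pick $\alpha > \alpha^*(t,\Delta_2)$ so that the BEC hypothesis $(\Delta_1+1)(\Delta_2+1) \le n+1$, i.e.\ $n \ge \Delta_1\Delta_2 + \Delta_1 + \Delta_2$, forces $n$ to be strictly larger than both \eqref{eqn:bluered1} and \eqref{eqn:bluered2}. (The degenerate case $\Delta_2 = 0$ is trivial, so assume $\Delta_2 \ge 1$ so that $\alpha^*$ is defined.)

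First I would rewrite the two conditions of Theorem~\ref{thm:bluered}. Cancelling the common $\Delta_1\Delta_2$, the inequality $n > \eqref{eqn:bluered1}$ is equivalent to $\Delta_1 > \bigl(t - 1 + \frac{\alpha(\alpha-1)}{(\alpha-1)^2-\alpha}\bigr)\Delta_2$. For \eqref{eqn:bluered2} with $s=1$, the inequality $n > \eqref{eqn:bluered2}$ becomes $\Delta_1(\Delta_2+1) + \Delta_2 > (2\alpha t+2)\Delta_2 + ((2\alpha+1)t-1)\Delta_2^2$, and since for $t \ge 2$ one has $(2\alpha t + 1) + ((2\alpha+1)t-1)\Delta_2 \le ((2\alpha+1)t-1)(\Delta_2+1)$ — the difference being exactly $t - 2 \ge 0$ — this in turn follows from $\Delta_1 > ((2\alpha+1)t-1)\Delta_2$. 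The natural move is therefore to choose $\alpha$ making the two thresholds coincide, i.e.\ $t - 1 + \frac{\alpha(\alpha-1)}{(\alpha-1)^2-\alpha} = (2\alpha+1)t-1$, equivalently $\frac{\alpha(\alpha-1)}{(\alpha-1)^2-\alpha} = 2\alpha t$. Dividing by $\alpha$ and clearing denominators turns this into the quadratic $2t\alpha^2 - (6t+1)\alpha + (2t+1) = 0$, whose larger root is $\alpha = \frac{1}{4t}\bigl(6t+1+\sqrt{20t^2+4t+1}\bigr)$, precisely the value in the statement.

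Two verifications remain. One: this $\alpha$ satisfies $\alpha > \alpha^*(t,\Delta_2)$. Since $\alpha^* \le \frac12(3+\sqrt5)$, it suffices to check $\alpha > \frac12(3+\sqrt5)$, which reduces after clearing $4t$ to $\sqrt{20t^2+4t+1} > 2\sqrt5\,t-1$ and hence, squaring, to $4t > -4\sqrt5\,t$ — clearly true; in particular $(\alpha-1)^2 - \alpha > 0$, so all the displayed fractions are meaningful and positive. Two: the common threshold $((2\alpha+1)t-1)\Delta_2$ is at most $(4+\sqrt5)t\,\Delta_2$. From $2\alpha t = \frac12(6t+1+\sqrt{20t^2+4t+1})$ one gets $(2\alpha+1)t-1 = 4t - \tfrac12 + \tfrac12\sqrt{20t^2+4t+1}$, so the claim is equivalent to $\sqrt{20t^2+4t+1} \le 2\sqrt5\,t+1$ and hence, squaring, to $4t \le 4\sqrt5\,t$, again true. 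Combining with the hypothesis $\Delta_1 > (4+\sqrt5)t\,\Delta_2 \ge ((2\alpha+1)t-1)\Delta_2$, both \eqref{eqn:bluered1} and \eqref{eqn:bluered2} are at most $(\Delta_1+1)(\Delta_2+1)-1 \le n$, so Theorem~\ref{thm:bluered} gives the packing.

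The only \emph{genuine} content is the parameter optimization: spotting that $\alpha$ should be tuned to equalize the codegree-coefficient thresholds coming from the two conditions of Theorem~\ref{thm:bluered}, and being careful enough with the additive constants to see that for $t \ge 2$ condition \eqref{eqn:bluered2} is never the binding one once \eqref{eqn:bluered1} holds. Everything past the choice of $\alpha$ is elementary manipulation of square roots, and the reduction to $s=1$ is immediate from the definition of $\Delta^\vartriangle_2$. I expect no real obstacle beyond this bookkeeping.
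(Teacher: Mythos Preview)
Your proposal is correct and follows exactly the paper's approach: set $s=1$ (triangle-freeness), choose $\alpha=\frac{1}{4t}(6t+1+\sqrt{20t^2+4t+1})$ to equalise the thresholds $t-1+\frac{\alpha(\alpha-1)}{(\alpha-1)^2-\alpha}$ and $(2\alpha+1)t-1$, and then verify that this common threshold is at most $(4+\sqrt5)t$. You have simply fleshed out the verifications (that $\alpha>\alpha^*$ and that the strict inequalities go through, including the slack $t-2\ge 0$ in \eqref{eqn:bluered2}) which the paper leaves implicit.
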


\subsection*{Structure of the paper}

In the next section, we provide some notation and preliminary observations. In Section~\ref{sec:critical}, we discuss the common features of a hypothetical critical counterexample to one of our theorems. In Section~\ref{sec:proofs}, we prove Theorems~\ref{thm:blue} and~\ref{thm:bluered}. We conclude the paper with some remarks about the results, proofs and further possibilities.

\section{Notation and preliminaries}\label{sec:preliminaries}

Here we introduce some terminology which we use throughout.
We often call $G_1$ the {\em blue} graph and $G_2$ the {\em red} graph.
We treat the injective vertex mappings as labellings of the vertices from $1$ to $n$.
However, rather than saying, ``the vertex in $G_1$ (or $G_2$) corresponding to the label $i$'', we often only say, ``vertex $i$'', since this should never cause any confusion.
Our proofs rely on accurately specifying the neighbourhood structure as viewed from a particular vertex. Let $i\in [n]$.
The {\em blue neighbourhood $N_1(i)$ of $i$} is the set $\{j \,\mid\, ij \in E(G_1)\}$ and the {\em blue degree $\deg_1(i)$ of $i$} is $|N_1(i)|$.
The {\em red neighbourhood $N_2(i)$} and {\em red degree $N_2(i)$} are defined analogously.
For $j\in[n]$, a {\em red--blue-link (or $2$--$1$-link) from $i$ to $j$} is a vertex $i'$ such that $ii' \in E(G_2)$ and $i'j \in E(G_1)$.
The {\em red--blue-neighbourhood $N_1(N_2(i))$ of $i$} is the set $\{j \,\mid\, \exists\text{ red--blue-link from $i$ to $j$}\}$.
A {\em blue--red-link (or $1$--$2$-link)} and the {\em blue--red-neighbourhood} $N_2(N_1(i))$ are defined analogously.

In search of a certificate that $G_1$ and $G_2$ pack, without loss of generality, we keep the vertex labelling of the blue graph $G_1$ fixed, and permute only the labels in the red graph $G_2$. This can be thought of as ``moving'' the red graph above a fixed ground set $[n]$.
In particular, we seek to avoid the situation that there are $i,j \in [n]$ for which $ij$ is an edge in both $G_1$ and $G_2$ --- in this situation, we call $ij$ a {\em purple} edge induced by the labellings of $G_1$ and $G_2$.
So $G_1$ and $G_2$ pack if and only if they admit a pair of vertex labellings that induces no purple edge.
In our search, we make small cyclic sub-permutations of the labels (of $G_2$), which are referred to as follows.
For $i_0,\dots,i_{\ell-1} \in [n]$, a {\em $(i_0,\dots,i_{\ell-1})$-swap} is a relabelling of $G_2$ so that for each $k \in \{0,\dots,\ell-1\}$ the vertex labelled $i_k$ is re-assigned the label $i_{k+1\bmod \ell}$. In fact, we shall only require swaps having $\ell \in \{1,2\}$.
The following observation describes when a swap could be helpful in the search for a packing certificate.
This is identical to Lemma~1 in~\cite{KKY08}.

\begin{Lemma}\label{lem:swap}
Let $u_0,\dots,u_{\ell-1} \in [n]$. For every $k, k' \in \{0,\dots,\ell-1\}$, suppose that there is no red--blue-link from $u_k$ to $u_{k+1\bmod \ell}$ and that, if $u_k u_{k'} \in E(G_2)$, then $u_{(k+1 \bmod \ell)} u_{(k'+1 \bmod \ell)} \notin E(G_1)$. Then there is no purple edge incident to any of $u_0,\dots,u_{\ell-1}$ after a $(u_0,\dots,u_{\ell-1})$-swap. \qed
\end{Lemma}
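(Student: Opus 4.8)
The plan is to track precisely which vertex of $G_2$ occupies each label after the swap, and then rule out a purple edge at each of $u_0,\dots,u_{\ell-1}$ by a short case analysis. The key observation is that a $(u_0,\dots,u_{\ell-1})$-swap does nothing except move, for each $k$, the vertex previously carrying label $u_k$ to label $u_{k+1\bmod\ell}$, while every vertex outside $\{u_0,\dots,u_{\ell-1}\}$ keeps its label. Since edges of $G_2$ are edges between vertices, an edge of $G_2$ is present under the new labelling between labels $a$ and $b$ if and only if it was present under the old labelling between the (possibly relabelled) vertices now sitting at $a$ and $b$. I would state this ``vertex currently at label $a$'' bookkeeping convention explicitly before starting, since it is the one place where an index slip is easy.

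Then I would fix $m\in\{0,\dots,\ell-1\}$, write $m=k+1\bmod\ell$, and suppose for contradiction that $u_m v$ is a purple edge after the swap, with $v\in[n]$, $v\ne u_m$. Being purple means $u_m v\in E(G_1)$ (the labelling of $G_1$ is untouched) and that, after the swap, the vertices at labels $u_m$ and $v$ are adjacent in $G_2$; note the vertex now at $u_m$ is the one previously labelled $u_k$. I would split on whether $v$ is swapped. In Case~1, $v\notin\{u_0,\dots,u_{\ell-1}\}$, so the vertex at $v$ did not move and, read in the old labelling, the $G_2$-edge is $u_k v$; hence $v$ witnesses a red--blue-link from $u_k$ to $u_{k+1\bmod\ell}$ (since $u_k v\in E(G_2)$ and $v u_m\in E(G_1)$), contradicting the first hypothesis. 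In Case~2, $v=u_{k'+1\bmod\ell}$ for some $k'$, so the vertex at $v$ is the one previously labelled $u_{k'}$, the $G_2$-edge read in the old labelling is $u_k u_{k'}$, and $u_m v=u_{k+1\bmod\ell}u_{k'+1\bmod\ell}\in E(G_1)$, contradicting the second hypothesis. Either way this is a contradiction, and since $m$ was arbitrary, the conclusion follows. The degenerate cases ($v=u_m$, or $k'=k$ in Case~2, or $\ell=1$ where the swap is the identity) are all excluded automatically because $G_1$ and $G_2$ have no loops.

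The argument is essentially bookkeeping rather than genuine difficulty: the only delicate point is translating ``adjacent in $G_2$ under the new labelling'' back into an adjacency under the old labelling so that the two hypotheses apply verbatim, together with getting the \emph{direction} of the red--blue-link right (from $u_k$ to $u_{k+1\bmod\ell}$, not the reverse). Since this is exactly Lemma~1 of~\cite{KKY08}, one could alternatively just cite it, but writing out the two-case proof as above is short and self-contained.
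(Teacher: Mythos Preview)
Your proof is correct. The paper itself does not give a proof of this lemma at all: it simply appends a \qedsymbol\ to the statement and notes beforehand that it is identical to Lemma~1 of~\cite{KKY08}. Your two-case analysis (according to whether the other endpoint $v$ of a putative purple edge lies inside or outside the cycle $\{u_0,\dots,u_{\ell-1}\}$) is exactly the natural bookkeeping argument, and your care about the direction of the red--blue-link and the loopless degenerate cases is appropriate. In effect you have written out what the paper chose to suppress by citation; your closing remark that ``one could alternatively just cite it'' is precisely the route the paper takes.
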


We will use a classic extremal set theoretic result to upper bound the size of certain vertex subsets.

\begin{Lemma}[Corr\'adi~\cite{Cor69}]
Let $A_1,\ldots, A_N$ be $k$-element sets and $X$ be their union. If $|A_i \cap A_j| \le t-1$ for all $ i\neq j$, then 
$|X| \ge k^2 N/(k+ (N-1)(t-1))$. \qed
\end{Lemma}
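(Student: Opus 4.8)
The plan is a standard double-counting argument paired with the Cauchy--Schwarz inequality. For each $x \in X$ let $d(x) := |\{i : x \in A_i\}|$ denote the number of sets containing $x$. First I would count incidences in two ways: $\sum_{x \in X} d(x) = \sum_{i=1}^N |A_i| = kN$. Next I would count, over all unordered pairs $\{i,j\}$ with $i \ne j$, the elements of $A_i \cap A_j$; summing over such pairs and using the hypothesis $|A_i \cap A_j| \le t-1$ gives $\sum_{x \in X} \binom{d(x)}{2} = \sum_{i<j}|A_i \cap A_j| \le \binom{N}{2}(t-1)$.

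Combining these, $\sum_{x\in X} d(x)^2 = \sum_{x\in X} d(x) + 2\sum_{x\in X}\binom{d(x)}{2} \le kN + N(N-1)(t-1)$. Then, applying Cauchy--Schwarz to the vectors $(d(x))_{x\in X}$ and $(1)_{x\in X}$,
\[
(kN)^2 = \Bigl(\sum_{x\in X} d(x)\Bigr)^2 \le |X| \cdot \sum_{x \in X} d(x)^2 \le |X|\bigl(kN + N(N-1)(t-1)\bigr).
\]
Rearranging gives $|X| \ge k^2 N^2 / \bigl(kN + N(N-1)(t-1)\bigr) = k^2 N/(k + (N-1)(t-1))$, which is the claimed bound.

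There is essentially no substantive obstacle here; the only points requiring care are that Cauchy--Schwarz is invoked in the direction that bounds $|X|$ from below (that is, $(\sum d(x))^2 \le |X|\sum d(x)^2$, not the reverse), and that the denominator $k + (N-1)(t-1)$ is positive so the division is legitimate --- this holds trivially since $k \ge 1$ (each $A_i$ has $k$ elements, and the statement is only meaningful for $k \ge 1$). If one prefers to avoid Cauchy--Schwarz, the identical conclusion follows from convexity of $x \mapsto x^2$ (equivalently, the power-mean inequality), using that the average of the $d(x)$ over $x \in X$ equals $kN/|X|$.
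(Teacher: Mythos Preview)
Your proof is correct; it is the standard double-counting plus Cauchy--Schwarz argument for Corr\'adi's lemma. Note, however, that the paper does not actually give a proof of this statement: it is quoted as a classical result from~\cite{Cor69} and marked with \qed, so there is nothing in the paper to compare your argument against. Your write-up would serve perfectly well as a self-contained proof were one desired.
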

In particular, this implies the following.
\begin{Corollary}\label{CorCor}
Let $A_1,\ldots, A_N$ be size $\ge k$ subsets of a set $X$. If $k^2 > (t-1) \cdot |X|$ and $|A_i \cap A_j| \le t-1$ for all $i \neq j$, then
\begin{align*}
N \le  |X| \cdot \frac{k- (t-1)}{k^2-(t-1) \cdot |X|}.
\end{align*}
\begin{proof}
Consider arbitrary subsets $A_1^*\subset A_1,\ldots, A_N^* \subset A_N$ of size $k$. An application of Corr\'adi's lemma to $A_1^*,\ldots, A_N^*$ yields that
$|X| \ge k^2\cdot N/(k +(N-1)(t-1))$, which is easily seen to be equivalent to $(k^2-(t-1) \cdot |X|) \cdot N \le (k-t) \cdot |X|.$
The corollary follows after dividing both sides of the inequality by $k^2 - (t-1) \cdot |X|$. Note that this division does not cause a sign change because of the assumption that $k^2 > (t-1) \cdot |X|$.
\end{proof}
\end{Corollary}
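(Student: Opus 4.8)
The plan is to obtain Corollary~\ref{CorCor} essentially for free from Corr\'adi's lemma, the only work being two bookkeeping adjustments: passing from sets of size at least $k$ to sets of size exactly $k$, and tracking signs when clearing the denominator.

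First I would normalise the sizes. Corr\'adi's lemma as quoted is stated for $k$-element sets, so for each $i$ I would choose an arbitrary subset $A_i^* \subseteq A_i$ with $|A_i^*| = k$, which is possible since $|A_i| \ge k$. The intersection bound is inherited, $|A_i^* \cap A_j^*| \le |A_i \cap A_j| \le t-1$ for $i \ne j$, and the union of the $A_i^*$ is contained in $X$, hence has size at most $|X|$. Applying Corr\'adi's lemma to $A_1^*,\dots,A_N^*$ then gives $|X| \ge k^2 N / \big(k + (N-1)(t-1)\big)$.

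Next I would clear the (positive) denominator and collect the $N$-terms: $|X|\big(k + (N-1)(t-1)\big) \ge k^2 N$ expands and rearranges to $\big(k^2 - (t-1)|X|\big)\, N \le \big(k - (t-1)\big)\,|X|$. Finally, the hypothesis $k^2 > (t-1)|X|$ guarantees that the coefficient $k^2-(t-1)|X|$ is strictly positive, so dividing both sides by it preserves the direction of the inequality and yields $N \le |X|\,(k-(t-1)) / (k^2-(t-1)|X|)$, as claimed. There is no real obstacle here; the single point that needs care is exactly this last division, where the assumption $k^2 > (t-1)|X|$ is used essentially (and without which the stated bound would be meaningless), so I would be explicit that the sign of the divisor is what makes the final step legitimate.
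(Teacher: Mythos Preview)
Your proof is correct and follows essentially the same approach as the paper: pass to size-$k$ subsets, apply Corr\'adi's lemma, rearrange, and divide using the positivity of $k^2-(t-1)|X|$. In fact your rearranged inequality $\big(k^2-(t-1)|X|\big)N \le \big(k-(t-1)\big)|X|$ is the correct one; the paper's text contains a harmless typo writing $(k-t)$ in place of $(k-(t-1))$ at that step.
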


\section{Hypothetical critical counterexamples}\label{sec:critical}

The overall proof structure we use for both theorems is the same, and in this section we describe common features and some further notation.
Suppose the theorem (one of Theorem~\ref{thm:blue} or~\ref{thm:bluered}) is false. Then there must exist a counterexample, that is, a pair $(G_1,G_2)$ of non-packable graphs on $n$ vertices that satisfy the conditions of the theorem.

Moreover, we may assume that $(G_1,G_2)$ is a {\em critical} pair in the sense that $G_2$ is edge-minimal among all counterexamples. In other words, $G_1$ and $G_2-e$ pack for {\em any} $e\in E(G_2)$. There is no loss of generality, since the removal of an edge from $G_2$ increases neither $\Delta_2$ nor $\Delta^\vartriangle_2$ and obviously affects none of $\Delta_1$, $\Delta^\wedge_1$ and $n$, thus maintaining the required conditions.

Now choose \textit{any} edge $e = uv\in E(G_2)$.
Criticality implies that there is a pair of labellings of $G_1$ and $G_2$ such that $e$ is the {\em unique} purple edge, for otherwise $G_1$ and $G_2-e$ do not pack. Let us fix such a pair of labellings so that we can further describe the neighbourhood structure as viewed from $u$ (or $v$). Estimation of the sizes of subsets in this neighbourhood structure is our main method for deriving upper bounds on $n$ that in turn yield the desired contradiction from which the theorem follows.

We need the definition of the following vertex subsets (which are analogously defined for $v$ also):
\begin{align*}
A(u)
&:= N_2(N_1(u))\setminus (N_1(u)\cup N_2(u)\cup N_1(N_2(u)) ), \\
B(u)
&:= N_1(N_2(u))\setminus (N_1(u)\cup N_2(u)\cup N_2(N_1(u)) ), \\
A^*(u) 
&:=N_2(N_1(u)) \setminus ( N_2(u)\cup N_1(N_2(u)) ),
 \text{ and}\\
N_1^*(u)
&:= N_1(u) \cap ( N_1(N_2(u)) \setminus (N_2(u) \cup N_2(N_1(u))) ).
\end{align*}
One justification for specifying the above subsets is that the following two claims (which are essentially Claims~1 and~2 in~\cite{KKY08}) hold.

\begin{Claim}\label{clm:links1}
For all $w\in [n]\setminus\{v\}$, there is a red--blue-link or a blue--red-link from $u$ to $w$.
\end{Claim}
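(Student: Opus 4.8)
The idea is the standard swap argument: suppose for contradiction that there exists $w \in [n] \setminus \{v\}$ with no red--blue-link and no blue--red-link from $u$ to $w$. We want to perform a $(u,w)$-swap — relabelling so that $u$ and $w$ exchange labels — and show this removes the unique purple edge $e=uv$ without creating any new purple edge, contradicting non-packability of $G_1$ and $G_2$. Since the only purple edge before the swap is $uv$, and $w \ne v$, it suffices by Lemma~\ref{lem:swap} (applied with $\ell=2$, $u_0=u$, $u_1=w$) to check its hypotheses: (a) there is no red--blue-link from $u$ to $w$ and none from $w$ to $u$; (b) if $uw \in E(G_2)$ then $wu \notin E(G_1)$ (trivially the same edge, and $uv$ is the unique purple edge so $uw\in E(G_2)$ with $w\ne v$ forces $uw \notin E(G_1)$); and (c) the "self-loop" conditions $k=k'$, which amount to: $u u \notin E(G_2)$ trivially, and if $uu\in E(G_2)$ then... — these degenerate cases are vacuous since there are no loops.

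The one genuine point to verify is hypothesis (a): we assumed no red--blue-link from $u$ to $w$, but Lemma~\ref{lem:swap} also needs no red--blue-link from $w$ to $u$. A red--blue-link from $w$ to $u$ is a vertex $i'$ with $wi' \in E(G_2)$ and $i'u \in E(G_1)$; reading this from $u$'s perspective, $i' \in N_1(u)$ and $i' \in N_2(w)$, i.e. $w$ lies in $N_2(N_1(u))$, meaning there is a blue--red-link from $u$ to $w$. But we assumed there is no blue--red-link from $u$ to $w$ either, so this cannot happen. Hence both link-conditions of Lemma~\ref{lem:swap} hold, the $(u,w)$-swap introduces no purple edge incident to $u$ or $w$, and since $uv$ was the unique purple edge and is destroyed (as $v \ne w$ and $v \ne u$, and $u$ no longer carries its old label), we obtain a packing of $G_1$ and $G_2$ — a contradiction. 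Therefore the claimed link must exist.

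The main (and only) obstacle is bookkeeping the direction of links carefully: a red--blue-link from $u$ to $w$ and a red--blue-link from $w$ to $u$ are different objects, and one must recognise that ruling out the latter is exactly the hypothesis that there is no blue--red-link from $u$ to $w$. Everything else is a direct invocation of Lemma~\ref{lem:swap} together with the criticality assumption that $uv$ is the unique purple edge. No counting or the extremal lemmas are needed here; this claim is purely structural and sets up the quantitative estimates on $A(u)$, $B(u)$, etc., that follow.
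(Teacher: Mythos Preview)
Your proposal is correct and follows exactly the same approach as the paper, which simply invokes Lemma~\ref{lem:swap} for the $(u,w)$-swap to reach a contradiction. You have usefully spelled out the one point the paper leaves implicit, namely that ``no red--blue-link from $w$ to $u$'' is equivalent to ``no blue--red-link from $u$ to $w$'', so that the hypotheses of Lemma~\ref{lem:swap} are indeed met.
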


\begin{proof}
If not, then by Lemma~\ref{lem:swap}, a $(u,w)$-swap yields a new labelling such that $uv$ is not purple anymore and no new purple edges are created. Thus $G_1$ and $G_2$ pack, a contradiction.  See Figure~\ref{fig:links1}.
\end{proof}

\begin{figure}
 \begin{center}
   \begin{overpic}[width=0.54\textwidth]{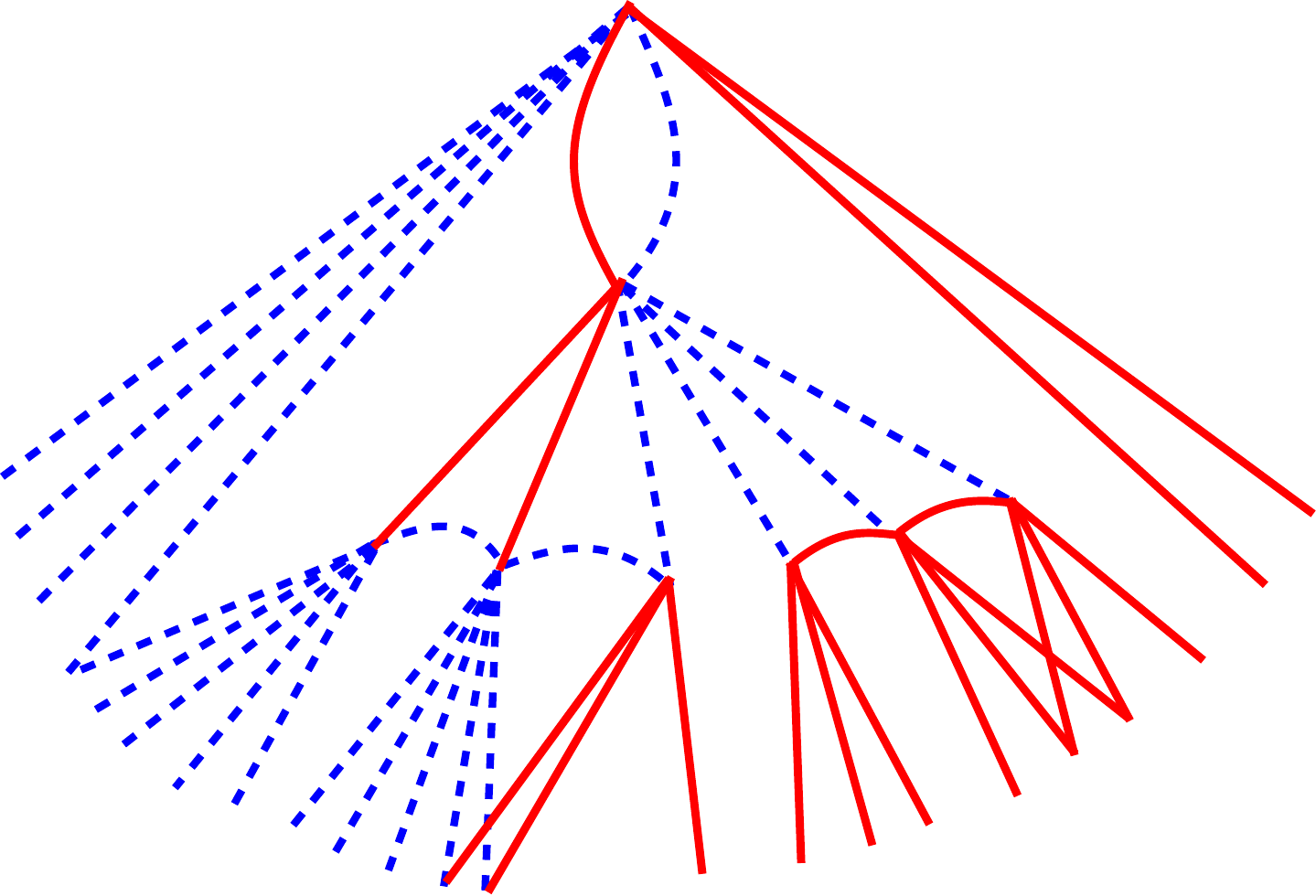}
    \put (45,61){ $v$}
    \put (45,49){ $u$}
 \end{overpic}
 \caption{All vertices (except possibly $v$) are reachable by a link from $u$ (Claim~\ref{clm:links1}).\label{fig:links1}}
\end{center}
\end{figure}

\begin{figure}
 \begin{center}
   \begin{overpic}[width=0.6\textwidth]{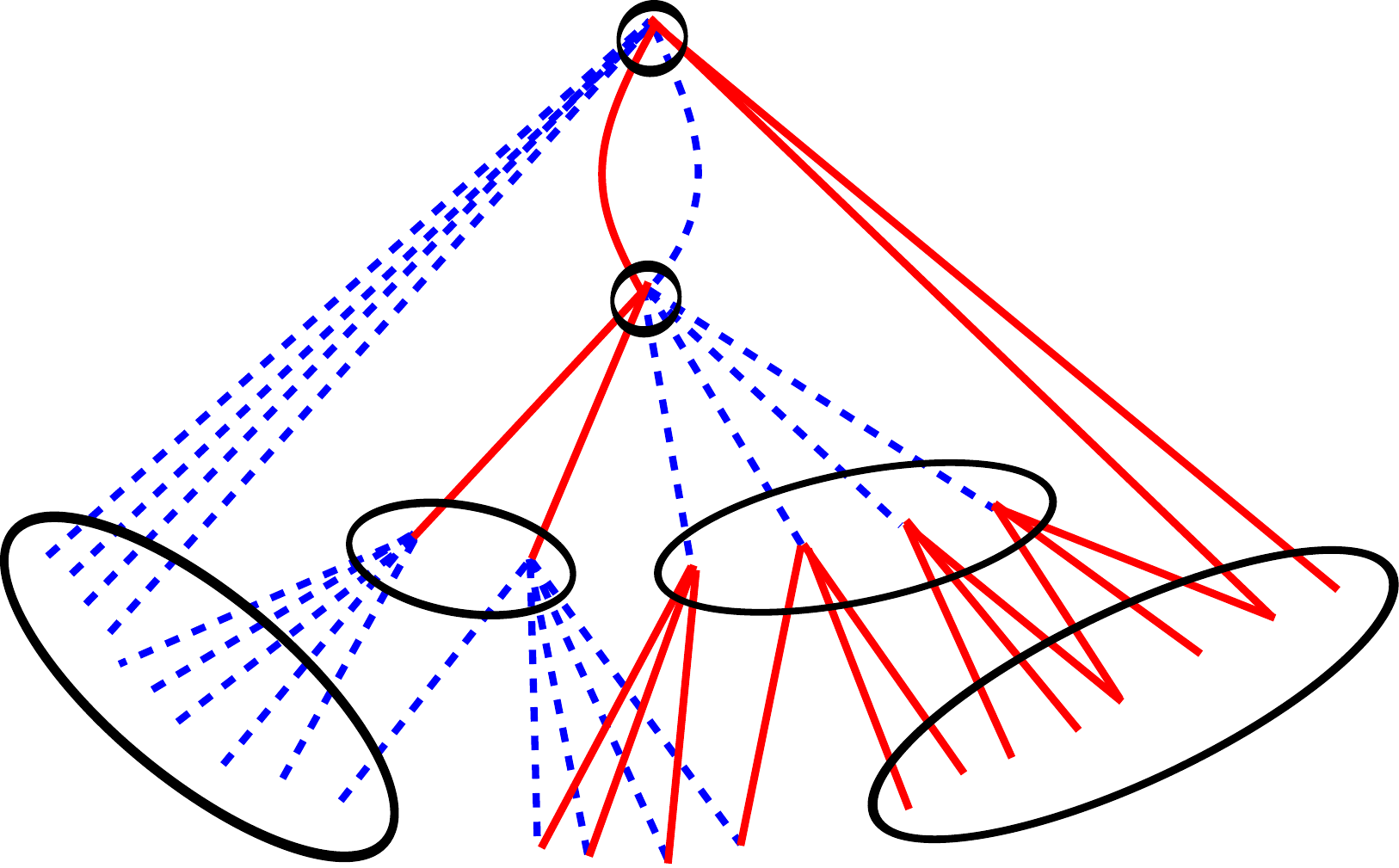}
    \put (49,59){ $v=N_2(u)\cap N_1(u)$}
    \put (44,45){ $u$}
    \put (23,28){$N_2(u)$}
    \put (64,31){$N_1(u)$}
    \put (2,4) {$B(u)$}
    \put (86,4) {$A(u)$}
 \end{overpic}
 \caption{The neighbourhood structure of a hypothetical critical counterexample, as seen from $u$.\label{fig:neighbourhoodstructure}}
\end{center}
\end{figure}

\begin{Claim}\label{clm:links2}
For all $a\in A^*(u)$ and $b\in B(u)$, there is a red--blue-link from $a$ to $b$.
\end{Claim}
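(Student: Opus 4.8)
Suppose for contradiction that, for some $a\in A^*(u)$ and $b\in B(u)$, there is no red--blue-link from $a$ to $b$. Since $a\in A^*(u)\subseteq N_2(N_1(u))$, fix a vertex $a'$ with $ua'\in E(G_1)$ and $aa'\in E(G_2)$, and since $b\in B(u)\subseteq N_1(N_2(u))$, fix a vertex $b'$ with $ub'\in E(G_2)$ and $bb'\in E(G_1)$. From the defining conditions of $A^*(u)$ and $B(u)$, together with $v\in N_1(u)\cap N_2(u)$ and $u\in N_1(N_2(u))\cap N_2(N_1(u))$, one checks that $a,b\notin\{u,v\}$ and $a\ne b$; and the hypothesis that there is no red--blue-link from $a$ to $b$ at once yields $a'b\notin E(G_1)$ and $ab'\notin E(G_2)$ (otherwise $a'$, respectively $b'$, would be such a link).

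The plan, as in the proof of Claim~\ref{clm:links1} (and following~\cite{KKY08}), is to relabel $G_2$ by a composition of label swaps so as to induce no purple edge at all, contradicting that $G_1$ and $G_2$ do not pack. As $uv$ is the unique purple edge, it suffices to move the currently purple $G_2$-edge between $u$ and $v$ onto a non-blue pair while leaving everything else conflict-free; concretely, one seeks a single cyclic swap (in the sense of Lemma~\ref{lem:swap}) whose support is a small subset of $\{u,v,a,a',b,b'\}$ containing both $u$ and $v$ and for which all the conditions of Lemma~\ref{lem:swap} hold. By Claim~\ref{clm:links1} applied to $u$ and to $v$, every vertex distinct from $v$, respectively from $u$, is link-reachable from $u$, respectively from $v$; hence no transposition of the label $u$ or $v$ with a single further vertex can serve, so the swap must have support of size at least $3$. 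The feature making $b$ a useful vertex to involve is that $b\notin N_1(u)$ (because $b\in B(u)$): a rotation that sends the vertex at $u$ to the position $b$ drops the purple edge onto a non-blue pair. The midpoint vertices $b'$ (a red neighbour of $u$ blue-adjacent to $b$) and $a'$ (a blue neighbour of $u$ red-adjacent to $a$) then have to be folded into the swap so that moving $u$ and $v$ off their positions creates no fresh purple edge.

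It then remains to verify the handful of link- and adjacency conditions that Lemma~\ref{lem:swap} demands of the chosen swap; these are assertions about $u,v,a,a',b,b'$ only --- that certain of them are not red--blue-reachable from others, that certain pairs are non-adjacent in $G_1$ or in $G_2$, and so on --- and each is delivered by one of: the memberships $a\notin N_2(u)\cup N_1(N_2(u))$ and $b\notin N_1(u)\cup N_2(u)\cup N_2(N_1(u))$; the derived non-adjacencies $a'b\notin E(G_1)$ and $ab'\notin E(G_2)$; the uniqueness of the purple edge $uv$; and Claim~\ref{clm:links1}. The verification splits into cases depending on which potential edges among $\{u,v,a,a',b,b'\}$ are actually present, in particular on whether $a\in N_1(u)$ and on the degenerate possibilities that $a'$ or $b'$ equals $v$, or that $a'=b'$. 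I expect this bookkeeping to be the only real difficulty: since no transposition touching $u$ or $v$ is safe in isolation, one must arrange that the conflicts created by the separate pieces of the chosen cyclic swap cancel, and the hypothesis that there is no red--blue-link from $a$ to $b$ is exactly what rules out the one configuration in which this cancellation would fail.
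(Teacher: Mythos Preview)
Your proposal is not a proof but a plan, and the plan contains a genuine misconception that would prevent you from finishing. You assert that the cyclic swap must contain \emph{both} $u$ and $v$, and then set about building a swap on a subset of $\{u,v,a,a',b,b'\}$ with an unspecified case analysis left as ``bookkeeping''. But nothing forces $v$ into the swap: since $uv$ is the unique purple edge and $u$ is a swap vertex, Lemma~\ref{lem:swap}'s conclusion (no purple edge incident to any swap vertex) together with the fact that edges disjoint from the swap support are unchanged already kills every purple edge. Insisting on moving $v$ as well would require control over $v$'s link structure that the hypotheses on $A^*(u)$ and $B(u)$ do not provide, and this is exactly why your sketch stalls at the case analysis.

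The paper's argument is far simpler and uses none of the auxiliary vertices $a',b'$: perform the $3$-cycle $(u,a,b)$. The three link conditions of Lemma~\ref{lem:swap} are immediate from the definitions --- $a\notin N_1(N_2(u))$ gives no red--blue-link from $u$ to $a$; $b\notin N_2(N_1(u))$ gives no red--blue-link from $b$ to $u$; and the contradiction hypothesis gives no red--blue-link from $a$ to $b$. The adjacency conditions of Lemma~\ref{lem:swap} are vacuous or trivial because $a\notin N_2(u)$, $b\notin N_2(u)$, and $b\notin N_1(u)$. So the entire verification is three lines, with no cases and no need to name $a'$ or $b'$ at all. Your derivations $a'b\notin E(G_1)$ and $ab'\notin E(G_2)$ are correct but superfluous: they are particular witnesses to ``no red--blue-link from $a$ to $b$'', which is used wholesale.
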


\begin{proof}
Since $B(u)\cap N_1(u) = B(u) \cap N_2(u)= \emptyset$ and $A^*(u) \cap N_2(u) = \emptyset$, we have that $bu \notin E(G_1) \cup E(G_2)$ and $ua \notin E(G_2)$. Furthermore, since $A^*(u)\cap N_1(N_2(u))= B(u)\cap N_2(N_1(u))=\emptyset$, there is no red--blue-link from $u$ to $a$ or from $b$ to $u$. Now suppose that there is also no red--blue-link from $a$ to $b$. Then it follows from Lemma~\ref{lem:swap} that after a $(u,a,b)$-swap there is no purple edge incident to any of $u,a,b$, which implies that there is no purple edge at all.
So we have obtained a packing of $G_1$ and $G_2$, a contradiction.
\end{proof}

In the next claim, we list three upper bounds on the total number $n$ of vertices in terms of the sizes of the vertex subsets defined above.  In the proofs of Theorems~\ref{thm:blue} and~\ref{thm:bluered}, we consider several cases for which we prove at least one of these upper bounds to be small enough for a contradiction with the assumed lower bounds on $n$.

\begin{Claim}\label{clm:UpperBoundsN}
The total number $n$ of vertices is at most each of the following quantities:
\begin{enumerate}
\item\label{clm:UpperBoundsN1} $|N_2(u)| + |A^*(u)| + |N_1(N_2(u))|$,
\item\label{clm:UpperBoundsN2} $|N_1^*(u)| + |N_2(u)| + |B(u)| + |N_2(N_1(u))|$,
\item\label{clm:UpperBoundsN3} $|A^*(v)| + |A^*(u)| +  |\left(N_2(u) \cup N_1(N_2(u)) \right) \cap \left( N_2(v) \cup N_1(N_2(v)) \right) |$.
\end{enumerate}
\end{Claim}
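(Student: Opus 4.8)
The plan is simply to verify, in each of the three cases, that the listed vertex subsets together cover all of $[n]$; the stated bound on $n$ then follows at once, since $n=|[n]|$ is at most the sum of the sizes of any covering family (no disjointness is needed). The only substantial ingredient is Claim~\ref{clm:links1}; everything else is unwinding the definitions of $A^*(u)$, $B(u)$ and $N_1^*(u)$.

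First I would record the following consequence of Claim~\ref{clm:links1}: a red--blue-link from $u$ to $w$ certifies $w\in N_1(N_2(u))$, and a blue--red-link from $u$ to $w$ certifies $w\in N_2(N_1(u))$, so $[n]\setminus\{v\}\subseteq N_1(N_2(u))\cup N_2(N_1(u))$, while $v\in N_2(u)$ since $uv\in E(G_2)$ (and $u$ itself is covered, as Claim~\ref{clm:links1} also applies to $w=u$). Since $A^*(u)=N_2(N_1(u))\setminus(N_2(u)\cup N_1(N_2(u)))$, this already gives $[n]\subseteq N_2(u)\cup A^*(u)\cup N_1(N_2(u))$, which is the first bound. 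For the second bound, by the same observation it suffices to show $N_1(N_2(u))\subseteq N_1^*(u)\cup N_2(u)\cup B(u)\cup N_2(N_1(u))$: any $w\in N_1(N_2(u))$ lying outside $N_2(u)\cup N_2(N_1(u))$ belongs to $N_1^*(u)$ if $w\in N_1(u)$ and to $B(u)$ otherwise, directly from the definitions of those two sets.

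For the third bound I would apply the first bound to both $u$ and $v$. Writing $P:=N_2(u)\cup N_1(N_2(u))$ and $Q:=N_2(v)\cup N_1(N_2(v))$, the first-bound argument gives $[n]=A^*(u)\cup P=A^*(v)\cup Q$, so any vertex outside $A^*(u)\cup A^*(v)$ must lie in both $P$ and $Q$, i.e.\ in $P\cap Q$; hence $[n]=A^*(u)\cup A^*(v)\cup(P\cap Q)$, which is precisely the third bound.

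There is no genuine obstacle here beyond Claim~\ref{clm:links1}: the one thing to watch is the set-theoretic bookkeeping — using each definition with exactly the set it subtracts, and confirming that the two distinguished vertices $u$ and $v$ are themselves accounted for. I expect the write-up to be short.
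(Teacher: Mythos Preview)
Your proposal is correct and follows essentially the same approach as the paper: in all three parts you use Claim~\ref{clm:links1} to obtain $[n]=\{v\}\cup N_1(N_2(u))\cup N_2(N_1(u))$ and then unwind the definitions of $A^*(u)$, $B(u)$, $N_1^*(u)$ to produce the required cover, with part~(iii) obtained by applying part~(i) symmetrically to $u$ and $v$ and intersecting. Your write-up is slightly more explicit about the bookkeeping (e.g.\ handling $u$ and $v$ themselves), but the argument is the same.
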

\begin{proof}
In all cases, $[n]$ equals the union of the neighbourhood sets that occur in the upper bound.
\begin{enumerate}
\item[\ref{clm:UpperBoundsN1}] The union of $N_2(u)$, $A^*(u)$ and $N_1(N_2(u))$ covers $\left\{v\right\} \cup N_2(N_1(u)) \cup N_1(N_2(u))$, which by Claim~\ref{clm:links1} equals $[n]$.
\item[\ref{clm:UpperBoundsN2}] The union of $N_1^*(u)$, $N_2(u)$, $B(u)$ and $N_2(N_1(u))$ covers $\left\{v\right\} \cup N_2(N_1(u)) \cup N_1(N_2(u))$, which equals $[n]$.
\item[\ref{clm:UpperBoundsN3}] By the proof of (i), $[n]$ is the union of $A^*(u)$ and $ N_2(u) \cup N_1(N_2(u)) $ as well as the union of $A^*(v)$ and $ N_2(v) \cup N_1(N_2(v)) $. It follows that $[n]$ also is the union of $A^*(u)$, $A^*(v)$ and $\left(N_2(u) \cup N_1(N_2(u)) \right) \cap \left( N_2(v) \cup N_1(N_2(v)) \right)$.\qedhere
\end{enumerate}
\end{proof}

The reason for working with $N_1^*(u)$ and $A^*(u)$ rather than the simpler sets $N_1(u)$ and $A(u)$ is the following. Under the requirement that the codegree $\Delta_1^\wedge$ of $G_1$ is less than $t$, we can upper bound $|N_1^*(u)|$ entirely in terms of $\Delta_2$. This is sharper than the trivial bound $|N_1(u)| \le \Delta_1$ because we work under conditions with $\Delta_1$ rather larger than $\Delta_2$. Similarly, since $N_1^*(u) \subset N_1(u)$, we need to compensate for the loss of covered vertices by working with the slightly enlarged set $A^*(u)$, rather than $A(u)$.
The following claims use the condition $\Delta_1^\wedge < t$  (which is assumed by both theorems).

\begin{Claim}\label{clm:truncatedblueneighbourhood}
$|N_1^*(u)|\le (t-1) \cdot \Delta_2$.
\end{Claim}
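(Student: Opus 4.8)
The plan is to bound $|N_1^*(u)|$ by exhibiting, for each $w \in N_1^*(u)$, a "witness" neighbour of $w$ that lies in a small set, and then controlling how many $w$'s can share each witness via the codegree bound $\Delta_1^\wedge < t$. By definition, $N_1^*(u) = N_1(u) \cap \bigl(N_1(N_2(u)) \setminus (N_2(u) \cup N_2(N_1(u)))\bigr)$, so every $w \in N_1^*(u)$ lies in $N_1(N_2(u))$; hence there is a blue--red-link from $u$ to $w$, i.e.\ a vertex $x$ with $ux \in E(G_1)$ and $xw \in E(G_2)$. The first step is to observe that such an $x$ must lie in $N_1(u)$, and in fact one can take $x \in N_1(u)$ itself; the set of candidate "pivots" $x$ has size at most $\Delta_1$, which is too big, so instead I would count from the red side: $w$ has a red neighbour $x$ (namely the blue--red-link vertex) with $x \in N_1(u)$. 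Actually the cleaner route is: each $w\in N_1^*(u)$ has a red neighbour in $N_1(u)$, and $w \notin N_2(N_1(u))$ is impossible — wait, that would be a contradiction. Let me re-read: $w \in N_1(N_2(u)) \setminus N_2(N_1(u))$ means there is a blue--red link $u \to x \to w$ (so $x \in N_1(u)$, $xw \in E(G_2)$), but there is no red--blue link from $u$ to $w$. So the key is: the pivot $x$ lies in $N_1(u)$ and $w \in N_2(x)$, so $|N_1^*(u)| \le \sum_{x \in N_1(u)} |N_2(x) \cap N_1^*(u)| \le |N_1(u)| \cdot \Delta_2$, which only gives $\Delta_1\Delta_2$. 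That is not good enough, so the real argument must instead fix a single red pivot.

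The correct approach: take the pivot on the \emph{red} side. Each $w \in N_1^*(u)$ lies in $N_1(N_2(u))$, so there is $x\in N_2(u)$ with $xw \in E(G_1)$; equivalently $w \in N_1(x)$ for some $x \in N_2(u)$. Thus $N_1^*(u) \subseteq \bigcup_{x \in N_2(u)} N_1(x)$, a union of $|N_2(u)| \le \Delta_2$ blue neighbourhoods. Now I would use the codegree condition: for distinct $x, x' \in N_2(u)$, $|N_1(x) \cap N_1(x')| \le \Delta_1^\wedge \le t-1$, since these are two vertices whose blue common neighbourhood has size $\Delta_1^\wedge(G_1)$. Hence by inclusion--exclusion (just the union bound with a correction, or more precisely Bonferroni), $|N_1^*(u)| \le \sum_{x} |N_1(x)| - \ldots$ — no, that inflates things. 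Instead, the clean count: we do \emph{not} need all of $N_1(x)$; we only need that each $w$ is hit, and we want to show few $w$'s. The right statement is that $N_1^*(u)$ is covered by $\le \Delta_2$ sets each of which, restricted to $N_1^*(u)$, pairwise intersect in $\le t-1$ vertices — but that still only caps $|N_1^*(u)|$ by roughly $\Delta_2 \cdot \Delta_1$ unless we have an \emph{upper} bound on each piece independent of $\Delta_1$, which we don't.

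So the genuinely right idea must be the dual: bound the number of \emph{pivots} needed, not the size of each piece. Fix $w\in N_1^*(u)$. Since $w \notin N_2(N_1(u))$, $w$ has no red neighbour in $N_1(u)$. Since $w\in N_1(x)$ for some $x\in N_2(u)$, and we can pick one such pivot $x=x(w)$ for each $w$. Suppose $x(w) = x(w') = x$ for $w \ne w'$ — then $w, w' \in N_1(x)$, which alone says nothing. The resolution: also $w \in N_1(u)$ and $w'\in N_1(u)$, so $w,w' \in N_1(u)\cap N_1(x)$, a blue common neighbourhood of the pair $\{u,x\}$, of size $\le \Delta_1^\wedge \le t-1$. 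Therefore \emph{each pivot $x\in N_2(u)$ accounts for at most $t-1$ vertices of $N_1^*(u)$}, since $N_1^*(u)\cap N_1(x) \subseteq N_1(u) \cap N_1(x)$ and $|N_1(u)\cap N_1(x)|\le t-1$. Summing over the at most $|N_2(u)| \le \Delta_2$ pivots gives $|N_1^*(u)| \le (t-1)\cdot\Delta_2$, as claimed.

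The main obstacle is recognising that the two relevant "coordinates" of each $w\in N_1^*(u)$ are that it lies in $N_1(u)$ \emph{and} in $N_1(x)$ for a pivot $x\in N_2(u)$, so that the codegree bound applies to the pair $\{u,x\}$ — the definition of $N_1^*(u)$ (intersecting with $N_1(u)$, and removing $N_2(N_1(u))$ to force the blue--red link to the pivot, removing $N_2(u)$ to keep the pivot distinct and outside) is engineered precisely to make this work. After that the computation is a one-line union bound over the $\le\Delta_2$ pivots; there is no delicate estimate beyond invoking $\Delta_1^\wedge < t$.
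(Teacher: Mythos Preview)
Your final argument is correct and is essentially the same as the paper's: both observe that $N_1^*(u) \subseteq N_1(u)\cap N_1(N_2(u)) = \bigcup_{x\in N_2(u)} \bigl(N_1(u)\cap N_1(x)\bigr)$ and then bound each piece by $t-1$ via $\Delta_1^\wedge<t$, summing over at most $\Delta_2$ pivots (the paper phrases this as a pigeonhole contradiction, you as a direct union bound). Note that neither the exclusion of $N_2(u)$ nor of $N_2(N_1(u))$ from $N_1^*(u)$ is actually needed for this bound---the paper simply bounds the larger set $N_1(u)\cap N_1(N_2(u))$---so your closing remark about the definition being ``engineered precisely to make this work'' slightly overstates matters.
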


\begin{proof}
Suppose $|N_1(u) \cap N_1(N_2(u))| \ge  (t-1) \cdot \Delta_2 + 1$, then there is at least one $x\in N_2(u)$ such that $|N_1(u) \cap N_1(x)| \ge \frac{1}{|N_2(u)|} \cdot \left( (t-1) \cdot \Delta_2 + 1 \right) > t-1$, which contradicts $\Delta_1^\wedge < t$.
\end{proof}

The following claim (in combination with Corr\'adi's lemma) is useful for an upper bound on $|B(u)|$ that is only linear in $\Delta_2$,  provided that $|A^*(u)|$ is at least quadratic in $\Delta_2$. See Case~\ref{case:blue1} in the proof of Theorem~\ref{thm:blue}.

\begin{Claim}\label{clm:kclaim}
For any $b\in B(u)$, $|N_1(b)\cap A^*(u)|\ge |A^*(u)|/\Delta_2 -t (\Delta_2+1)$.
\end{Claim}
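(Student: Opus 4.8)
The plan is to count red--blue-links from $b$ to the vertices of $A^*(u)$ in two ways, splitting these links according to the ``intermediate'' vertex $b'\in N_2(b)$ that witnesses each link. First I would recall that by Claim~\ref{clm:links2}, applied with the roles of $u$ unchanged, every $a\in A^*(u)$ is reachable from $b$ by a red--blue-link; that is, for every $a\in A^*(u)$ there is some $b'$ with $bb'\in E(G_2)$ and $b'a\in E(G_1)$. Hence $A^*(u)\subseteq N_1(N_2(b))$, and more precisely $A^*(u)\subseteq\bigcup_{b'\in N_2(b)}N_1(b')$. Since $|N_2(b)|\le\Delta_2$, some single $b'\in N_2(b)$ must satisfy $|N_1(b')\cap A^*(u)|\ge |A^*(u)|/\Delta_2$.

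The second step is to replace this ``good'' intermediate vertex $b'$ by $b$ itself, at the cost of an additive error term. The natural way to do this is to compare $N_1(b')$ with $N_1(b)$: a vertex $a\in N_1(b')\cap A^*(u)$ fails to lie in $N_1(b)$ only if $ba\notin E(G_1)$, and I would argue that there are few such $a$. Here is where the codegree hypothesis $\Delta_1^\wedge<t$ enters, together with the structure of $A^*(u)$. Each $a\in A^*(u)\subseteq N_2(N_1(u))$ lies in $N_1(x)$ for some $x\in N_1(u)$, so $A^*(u)$ is covered by the $\le\Delta_1$ blue neighbourhoods $\{N_1(x):x\in N_1(u)\}$; for the subset of those $a$ additionally lying in $N_1(b')$, we are intersecting $N_1(b')$ with each $N_1(x)$, and since $\Delta_1^\wedge<t$ we get $|N_1(b')\cap N_1(x)|\le t-1$ for $x\ne b'$ — but this bound over $\Delta_1$ values of $x$ is too weak. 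I expect the right decomposition is instead via the red neighbourhood: write each link's intermediate vertex as ranging over $N_2(b)$, and for a fixed target vertex $a\in A^*(u)$ count how many intermediate vertices $b'\in N_2(b)$ can send a blue edge to $a$; this is $|N_2(b)\cap N_1(a)|$, which is a codegree in a mixed sense and need not be small. The cleaner route, and the one I would pursue, is: the number of $a\in N_1(b')\cap A^*(u)$ with $ba\notin E(G_1)$ but $b'a\in E(G_1)$ is at most the number of $a$ with $b'a\in E(G_1)$ and $a\in A^*(u)\setminus N_1(b)$; since $a\in A^*(u)$ means $a\notin N_2(u)$ and $a\in N_2(N_1(u))$, and one checks that such an $a$ would otherwise contradict some membership condition, the error is controlled by $|N_2(b)|$ blue neighbourhoods each meeting $N_1(b')$ in $\le t-1$ vertices (by $\Delta_1^\wedge<t$), plus the at most $\Delta_2+1$ vertices in $\{b'\}\cup N_2(b')$ or similar. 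Summing, the loss is at most $t(\Delta_2+1)$, giving $|N_1(b)\cap A^*(u)|\ge |N_1(b')\cap A^*(u)| - t(\Delta_2+1)\ge |A^*(u)|/\Delta_2 - t(\Delta_2+1)$.

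Carrying this out carefully, I would fix $b'\in N_2(b)$ maximising $|N_1(b')\cap A^*(u)|$, note $|N_1(b')\cap A^*(u)|\ge|A^*(u)|/\Delta_2$, and then bound $|N_1(b')\cap A^*(u)|-|N_1(b)\cap A^*(u)|\le|(N_1(b')\setminus N_1(b))\cap A^*(u)|$. To bound this last quantity, observe that any such $a$ has $b'a\in E(G_1)$, and since $\Delta_1^\wedge<t$ the set $N_1(b')$ meets any other blue neighbourhood in at most $t-1$ vertices; covering the relevant part of $A^*(u)$ by the $\le\Delta_2$ blue neighbourhoods $N_1(x)$, $x\in N_2(b)\setminus\{b'\}$ — together with the observation that an $a$ not covered this way must be handled via $a\in N_1(b')$ itself intersected with a neighbourhood containing $b'$, of which there are at most $\Delta_2+1$ — yields the bound $(t-1)\Delta_2 + (\Delta_2+1) \le t(\Delta_2+1)$. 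The main obstacle is pinning down exactly which covering family to use so that every $a\in(N_1(b')\setminus N_1(b))\cap A^*(u)$ is caught, and verifying that the relevant neighbourhoods really do intersect $N_1(b')$ in fewer than $t$ vertices; once the covering is set up, the codegree bound does the rest and the arithmetic collapses to $t(\Delta_2+1)$.
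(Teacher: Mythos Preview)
Your proposal has a genuine error at the very first step: you have reversed the direction of the link supplied by Claim~\ref{clm:links2}. That claim gives, for each $a\in A^*(u)$ and $b\in B(u)$, a red--blue-link \emph{from $a$ to $b$}; by definition this is a vertex $c$ with $ac\in E(G_2)$ and $cb\in E(G_1)$, so $c\in N_2(a)\cap N_1(b)$ and hence $a\in N_2(N_1(b))$. You wrote instead that there is $b'$ with $bb'\in E(G_2)$ and $b'a\in E(G_1)$, which would be a red--blue-link from $b$ to $a$; this gives $A^*(u)\subseteq N_1(N_2(b))$, which is not what the claim provides.

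This reversal is what forces you to introduce the auxiliary vertex $b'\in N_2(b)$ and then try to pass from $N_1(b')$ back to $N_1(b)$ --- a step you yourself flag as the ``main obstacle'', and indeed there is no covering family that makes it work, because $b$ and $b'$ are simply unrelated blue neighbourhoods. With the correct direction the argument is much shorter: since each blue neighbour $c$ of $b$ has at most $\Delta_2$ red neighbours in $A^*(u)$, the inclusion $A^*(u)\subseteq N_2(N_1(b))$ immediately yields $|N_1(b)|\ge |A^*(u)|/\Delta_2$. The remaining task is to bound $|N_1(b)\setminus A^*(u)|$. Using Claim~\ref{clm:links1} one has $[n]\setminus A^*(u)\subseteq N_1(u)\cup N_2(u)\cup N_1(N_2(u))$; then $|N_1(b)\cap N_1(N_2(u))|\le (t-1)\Delta_2$ (apply the codegree bound $|N_1(b)\cap N_1(x)|\le t-1$ to each $x\in N_2(u)$), $|N_1(b)\cap N_1(u)|\le t$, and $|N_1(b)\cap N_2(u)|\le \Delta_2$, summing to at most $t(\Delta_2+1)$. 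Subtracting gives the claim.
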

\begin{proof}
For all $b\in N_1(N_2(u))$ it holds that $|N_1(b)\cap N_1(N_2(u))| \le (t-1) \cdot |N_2(u)| \le (t-1) \cdot \Delta_2$. Indeed, otherwise there would exist a blue copy of $K_{2,t}$ in the graph induced by $N_1(N_2(u))\cup N_2(u)$. Similarly, $|N_1(b)\cap N_1(u)| \le t$ and $|N_1(b)\cap N_2(u)| \le \Delta_2$. So for every $b\in N_1(N_2(u))$, at most $t\cdot (\Delta_2 + 1)$ blue neighbours of $b$ are in $[n] \setminus A(u)$. So in particular, for every $b\in B(u)$, at most $t\cdot (\Delta_2 + 1)$ blue neighbours of $b$ are in $[n] \setminus A^*(u)$.

Using Claim~\ref{clm:links2} and the fact that each blue neighbour of a fixed $b\in B(u)$ has at most $\Delta_2$ red neighbours in $A^*(u)$, we see that every $b\in B(u)$ has at least $\lceil |A^*(u)|/\Delta_2 \rceil$ blue neighbours, and thus at least $|A^*(u)|/\Delta_2  -t (\Delta_2+1)$ blue neighbours in $A^*(u)$.
\end{proof}

\section{Proofs}\label{sec:proofs}

\subsection{Proof of Theorem~\ref{thm:blue}}

Suppose the theorem is false. Consider a critical counterexample, a pair of non-packable graphs $(G_1,G_2)$, with $G_2$ edge-minimal, satisfying the constraints of the theorem. We distinguish three cases, for each of which we derive an upper bound on $n$, given by one of the inequalities~\eqref{case1bound},~\eqref{case2bound} and~\eqref{case3bound}. At least one of these three inequalities should hold, so together they contradict the condition that $\max\left(\eqref{case1bound},\eqref{case2bound},\eqref{case3bound}\right) = \max(\eqref{eqn:blue1},\eqref{eqn:blue2},\eqref{eqn:blue3},\eqref{eqn:blue4})  < n$, thus proving the theorem.

\begin{enumerate}
\item \label{case:blue1} 
There exists a vertex $u \in [n]$ and there are labellings of $G_1$ and $G_2$ such that $u$ is incident to the unique purple edge and $|A^*(u)| \ge \alpha t \cdot  \Delta_2(\Delta_2+1)$.
\item \label{case:blue2}Case~\ref{case:blue1} does not hold and furthermore $|N_2(u)\cap N_2(v)| < (1-\epsilon) \cdot \Delta_2$ for some edge $uv \in E(G_2)$. 
\item \label{case:blue3} Neither of Cases~\ref{case:blue1} and~\ref{case:blue2} hold.
\end{enumerate} 
We now proceed with deriving upper bounds on $n$ for each of these three cases.

\paragraph{Bound for Case~\ref{case:blue1}.}
Choose a vertex $u\in [n]$ and labellings of $G_1$ and $G_2$ such that $u$ is incident to the unique purple edge and $|A^*(u)| \ge \alpha t \cdot \Delta_2(\Delta_2+1)$.  See Figure~\ref{fig:blue1} for a depiction of the argumentation in this case. From now on, we write $k:=|A^*(u)|/\Delta_2  -t (\Delta_2+1)$. Our first tool is Claim~\ref{clm:kclaim}, which yields that all $b\in B(u)$ satisfy $|N_1(b)\cap A^*(u)| \ge k$. Note that $k \ge 1$, since $\alpha> 1$. Our second tool is Corr\'adi's lemma, or rather Corollary~\ref{CorCor}, which we apply with $X=A^*(u)$ and $N= |B(u)|$ and with size $\ge k$ subsets $A_1,\ldots, A_N \subset X$ given by $N_1(b)\cap A^*(u)$, for all $b\in B(u)$. Note that $|A_i\cap A_j| \le t-1$ for all $i\neq j$, or else there would be a blue copy of $K_{2,t}$.

\begin{figure}
 \begin{center}
   \begin{overpic}[width=0.6\textwidth]{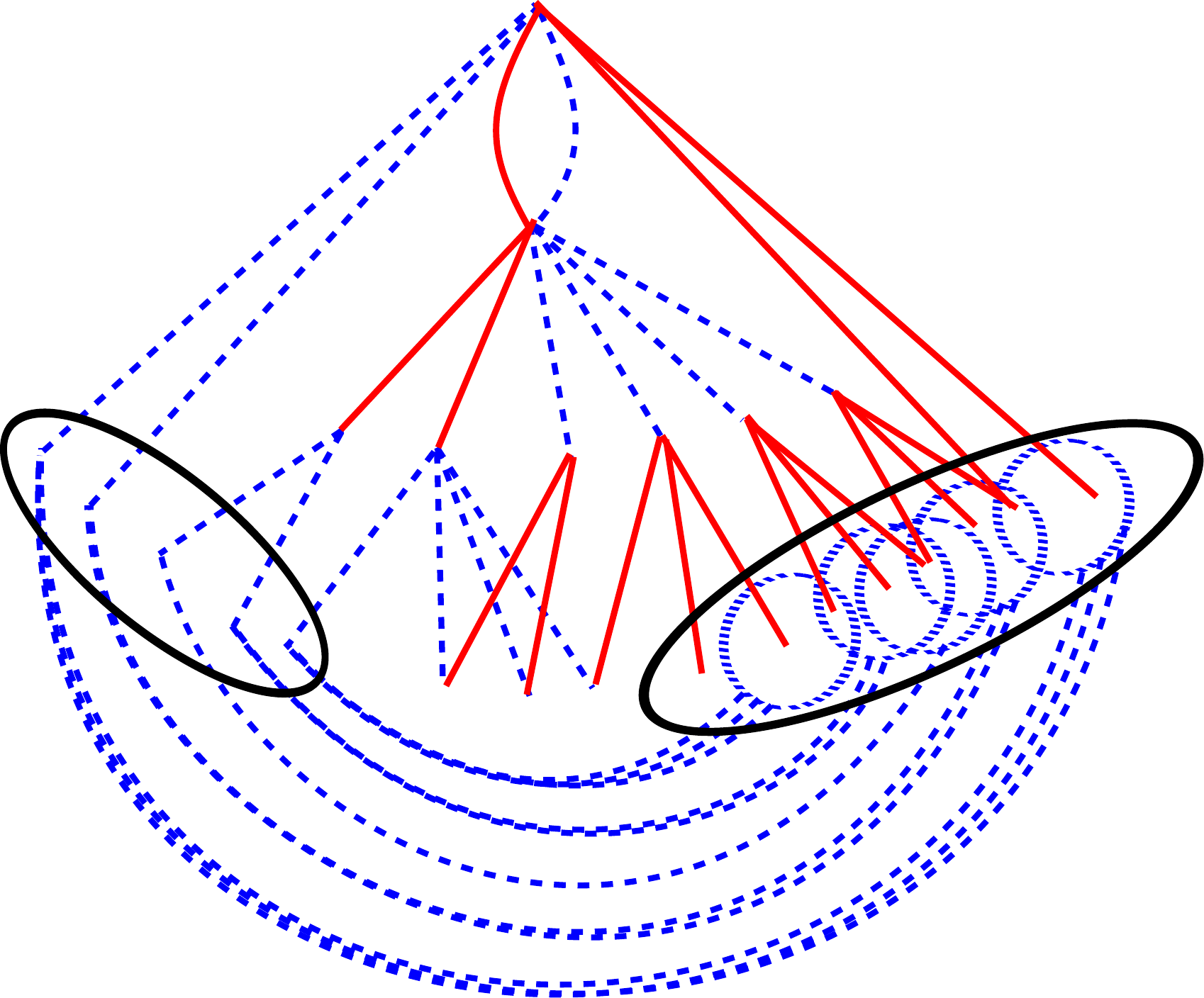}
    \put (42.3,76.5){ $v$}
    \put (42,67){ $u$}
    \put (-11,43) { \parbox{.5\textwidth}{$B(u)$ \\ \textit{very} \\ \textit{small}}  }
    \put (101,43) {\parbox{.5\textwidth}{$A^*(u)$ \\ \textit{large}} }
 \end{overpic}
  \caption{A depiction of Case~\ref{case:blue1} of Theorem~\ref{thm:blue}, that $|A^*(u)| = \Omega(\Delta_2^2)$ implies $|B(u)| = O(\Delta_2)$.\label{fig:blue1}}
\end{center}
\end{figure}

In order to apply Corollary~\ref{CorCor}, we need to check that its condition $k^2 > (t-1) \cdot  |A^*(u)|$ holds. For that, we 
write $\beta := |A^*(u)|/(t \Delta_2(\Delta_2+1))$, so that $k= (\beta-1)t(\Delta_2+1)$. Now
\begin{align*}
k^2 - (t-1) \cdot  |A^*(u)|
&= \left( (\beta-1) t (\Delta_2+1)  \right)^2 - \beta t \Delta_2(\Delta_2+1)(t-1) \\
&= \left( (\beta-1)^2- \gamma \cdot \beta \right) \cdot (t(\Delta_2+1))^2,
\end{align*} 
which is positive if and only if $(\beta-1)^2- \gamma \beta >0$, which holds true because $\beta \ge \alpha >\alpha^*$.
Thus, by Corollary~\ref{CorCor}, we obtain
\begin{align*}
|B(u)| \le |A^*(u)| \cdot \frac{k-(t-1)}{k^2 - (t-1) \cdot |A^*(u)|} =\frac{1-\frac{t-1}{k}}{\frac{k}{|A^*(u)|}-\frac{t-1}{k}}.
\end{align*}
The numerator and denominator of the right hand side are both positive, so we can bound and rearrange as follows:
\begin{align}\label{kopjethee2}
|B(u)| &\le  \left(\frac{k}{|A^*(u)|} - \frac{t-1}{k}\right)^{-1}
= \left( \frac{(\beta-1)t(\Delta_2+1)}{\beta t \Delta_2 (\Delta_2+1)}  - \frac{t-1}{(\beta-1)t(\Delta_2+1)} \right)^{-1}  \nonumber   \\
&= \Delta_2 \cdot \left( \frac{\beta-1}{\beta}  - \frac{1}{\beta-1} \cdot \frac{\Delta_2}{\Delta_2+1} \cdot \frac{t-1}{t} \right)^{-1}
 = \Delta_2 \cdot \left( \frac{\beta-1}{\beta}  - \frac{\gamma}{\beta-1}  \right)^{-1}    \nonumber  \\
&\le \Delta_2 \cdot \frac{\alpha (\alpha-1)}{(\alpha-1)^2- \gamma \alpha},
\end{align}
where the last step holds because $\beta \ge \alpha > \alpha^*$ and $\alpha^*$ is the larger singular point of $\frac{\beta (\beta-1)}{(\beta-1)^2-\gamma \beta}$, which is a decreasing function of $\beta$ for all $\beta> \alpha^*$.

Evaluating~\eqref{kopjethee2} and Claim~\ref{clm:truncatedblueneighbourhood} in the upper bound of Claim~\ref{clm:UpperBoundsN}\ref{clm:UpperBoundsN2} yields
 \begin{align}\label{case1bound} 
n &\le  |N_1^*(u)| + |N_2(u)| + |B(u)| + |N_2(N_1(u))| \nonumber \\
&\le  \left( t-1\right)  \cdot \Delta_2 + \Delta_2 + \frac{\alpha (\alpha-1)}{(\alpha-1)^2-\alpha} \cdot  \Delta_2  +  \Delta_1 \Delta_2 \nonumber \\
&= \left(t+\frac{\alpha (\alpha-1)}{(\alpha-1)^2-\alpha} \right) \cdot \Delta_2 + \Delta_1 \Delta_2.\end{align}

\paragraph{Bound for Case~\ref{case:blue2}.}
Choose labellings of $G_1$ and $G_2$ such that there is a unique purple edge $uv$ that satisfies $|N_2(u) \cap N_2(v)| < (1-\epsilon) \cdot \Delta_2$. Note that the inequalities $|A^*(u)| < \alpha t \cdot \Delta_2(\Delta_2+1)$ and $|A^*(v)| < \alpha t \cdot \Delta_2(\Delta_2+1)$ are satisfied as well, as a direct consequence of the assumptions of Case~\ref{case:blue2}.

We proceed with deriving a technical estimate on an intersection of neighbourhood sets. For each $x \in N_2(u) \setminus N_2(v)$ and $y \in N_2(v) \setminus N_2(u)$ we have $x\neq y$ and therefore absence of blue copies of $K_{2,t}$ implies the inequality $|N_1(x) \cap N_1(y)| \le t-1$. So 
\begin{align*}
|N_1( N_2(u)\setminus N_2(v)) \cap N_1(N_2(v)\setminus N_2(u))|
&\le \sum_{x \in N_2(u) \setminus N_2(v)} \sum_{y \in N_2(v) \setminus N_2(u)} |N_1(x) \cap N_1(y)| \\
&\le |N_2(u) \setminus N_2(v)| \cdot |N_2(v) \setminus N_2(u)| \cdot (t-1)\\
&\le (\Delta_2-|N_2(u) \cap N_2(v)|)^2 \cdot (t-1).
\end{align*}
Furthermore, since $|N_2(u) \cap N_2(v)| < (1-\epsilon) \cdot \Delta_2$,
\begin{align*}
|N_1(N_2(u)) \cap N_1(N_2(v)) | & \le |N_1(N_2 (u)\cap N_2(v))|  + |N_1( N_2(u)\setminus N_2(v)) \cap N_1(N_2(v)\setminus N_2(u))|  \\
&< \Delta_1 \cdot |N_2(u)\cap N_2(v)|  + (\Delta_2-|N_2(u)\cap N_2(v)|)^2 \cdot (t-1) \\
&\le \max_{p \in \left\{0,1,2,\ldots, \lfloor(1-\epsilon)\cdot \Delta_2 \rfloor \right\}} \left( \Delta_1 \cdot p  + (\Delta_2-p)^2 \cdot (t-1)  \right).
\end{align*}
See Figure~\ref{fig:blue2}.
Finally, we evaluate this in Claim~\ref{clm:UpperBoundsN}\ref{clm:UpperBoundsN3} to find the following bound on $n$:
\begin{align}\label{blablabla2}
 n &\le |A^*(v)| + |A^*(u)| +  |\left(N_2(u) \cup N_1(N_2(u)) \right) \cap \left( N_2(v) \cup N_1(N_2(v)) \right) | \nonumber \\
 &\le |A^*(v)| + |A^*(u)| +  |N_2(u)| + |N_2(v)| +  |N_1(N_2(u)) \cap N_1(N_2(v)) | \nonumber \\
&\le 2 \alpha t \cdot \Delta_2(\Delta_2+1) + 2\Delta_2 +  \max_{p \in \left\{0,1,2,\ldots, \lfloor(1-\epsilon)\cdot \Delta_2 \rfloor \right\}} \left( \Delta_1 \cdot p  + (\Delta_2-p)^2 \cdot (t-1)  \right).
\end{align}
In particular, this implies the slightly rougher bound 
\begin{align}\label{case2bound}
n \le 2 \alpha t \cdot \Delta_2(\Delta_2+1) + 2\Delta_2 +  (1-\epsilon) \cdot \Delta_1 \Delta_2  + \Delta_2^2 \cdot (t-1).
\end{align}

\begin{figure}
 \begin{center}
   \begin{overpic}[width=0.6\textwidth]{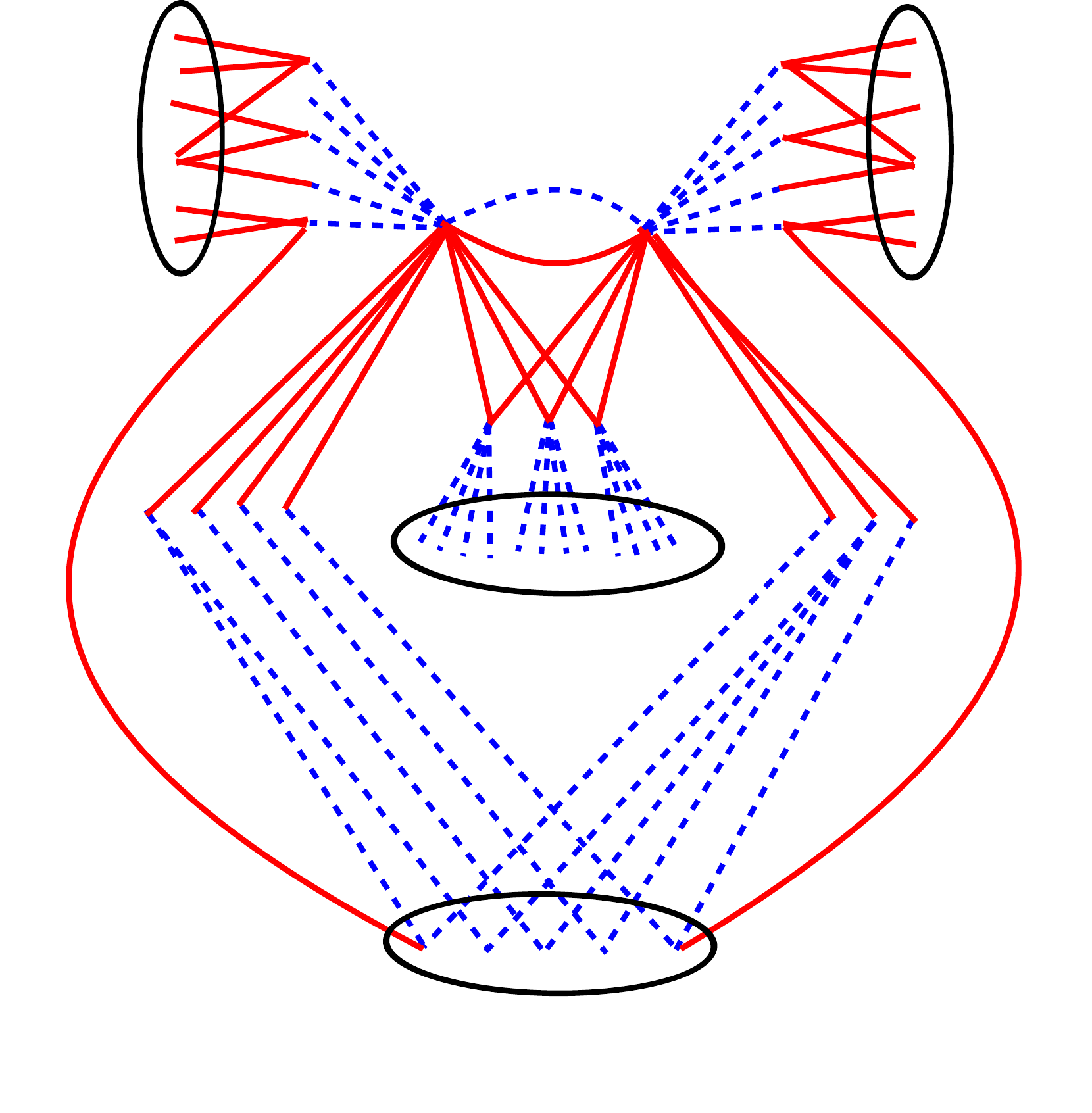}
    \put (42,79){ $v$}
    \put (53,79){ $u$}
    \put (0,87) { \parbox{.5\textwidth}{$A^*(v)$ \\ \textit{small}}  }
    \put (87,87) {\parbox{.5\textwidth}{$A^*(u)$ \\ \textit{small}} }
    \put (36,43.6){{\small $N_1(N_2(u) \cap N_2(v))$}}
    \put (46,39.5){\textit{small}}
    \put (17,6.8){$N_1( N_2(u)\setminus N_2(v)) \cap N_1(N_2(v)\setminus N_2(u))$}
    \put (46,2.8){\textit{small}}
 \end{overpic}
 \caption{A depiction of Case~\ref{case:blue2} of Theorem~\ref{thm:blue}, that $|N_1(N_2(u)) \cap N_1(N_2(v))|$ is small. \label{fig:blue2}}
\end{center}
\end{figure}

\paragraph{Bound for Case~\ref{case:blue3}.}
Choose a pair of labellings of $G_1$ and $G_2$ that induces a unique purple edge $uv$. The assumptions of this case imply, in particular, that in the red graph the neighbourhoods of each pair of adjacent vertices overlap significantly: $|N_2(x) \cap N_2(y)| \ge (1-\epsilon) \cdot \Delta_2$ for each $xy\in E(G_2)$. 

We will derive two consequences, namely the implication
\begin{align}\label{firstconsequence} \left(  |A^*(u)| \ge 1 + \Delta_2 + \frac{\epsilon \cdot \Delta_2 }{1-2 \epsilon} \right)  \implies  \left( |B(u)| \le (t-1) \cdot \Delta_2^2 \right) \end{align}
and the inequality
\begin{align}\label{secondconsequence}
|N_2(N_1(u))| \le \frac{1+\epsilon}{2} \Delta_1 \Delta_2 + \frac{1-\epsilon}{2} (t-1) \cdot \Delta_2^2  + \frac{3}{2} \Delta_2. 
\end{align}

We start with proving the statement~\eqref{firstconsequence}, the first consequence. 
See Figure~\ref{fig:blue3first}.
Suppose $a \in A^*(u) \setminus  N_2(u)$ has a red neighbour $x \in N_2(u)$. Then $ux$ and $ax$ are edges of $G_2$, so $|N_2(a) \cap N_2(x)| \ge (1-\epsilon) \Delta_2$ and $|N_2(u)\cap N_2(x)| \ge (1-\epsilon) \Delta_2$. Combining this with the obvious fact that $|N_2(x)| \le \Delta_2$ yields that 
\begin{align}\label{plasticzakje}
|N_2(a) \cap N_2(u)| \ge (1-2\epsilon)\cdot \Delta_2.
\end{align}
Let us define
\begin{align*}
A^{**}(u):= \left\{ a \in A^*(u) \setminus N_2(u) \,\mid\, a \text{ has a red neighbour in } N_2(u) \right\}.
\end{align*}
It follows from~\eqref{plasticzakje} that $\sum_{a \in A^{**}(u)} |N_2(a) \cap N_2(u)| \ge |A^{**}(u)| \cdot (1-2\epsilon) \cdot \Delta_2$, so
\begin{align*}
\sum_{x \in N_2(u)} |N_2(x)|
&\ge \sum_{x \in N_2(u)} |N_2(x) \cap N_2(u)| + \sum_{a \in A^{**}(u)} |N_2(a) \cap N_2(u)| \\
&\ge (1-\epsilon)\Delta_2 \cdot |N_2(u)| + |A^{**}(u)| \cdot (1-2\epsilon) \cdot \Delta_2,
\end{align*}
and (crucially) since $\sum_{x \in N_2(u)} | N_2(x)|  \le \Delta_2 \cdot |N_2(u)| $, it follows that
\begin{align}\label{astersteru}
|A^{**}(u)|     \le    \frac{|N_2(u)| \cdot \Delta_2 - (1-\epsilon)\cdot \Delta_2 |N_2(u)|}{(1-2\epsilon) \cdot \Delta_2}   
= \frac{\epsilon \cdot |N_2(u)| }{1- 2 \epsilon}.
\end{align}

\begin{figure}
 \begin{center}
   \begin{overpic}[width=0.6\textwidth]{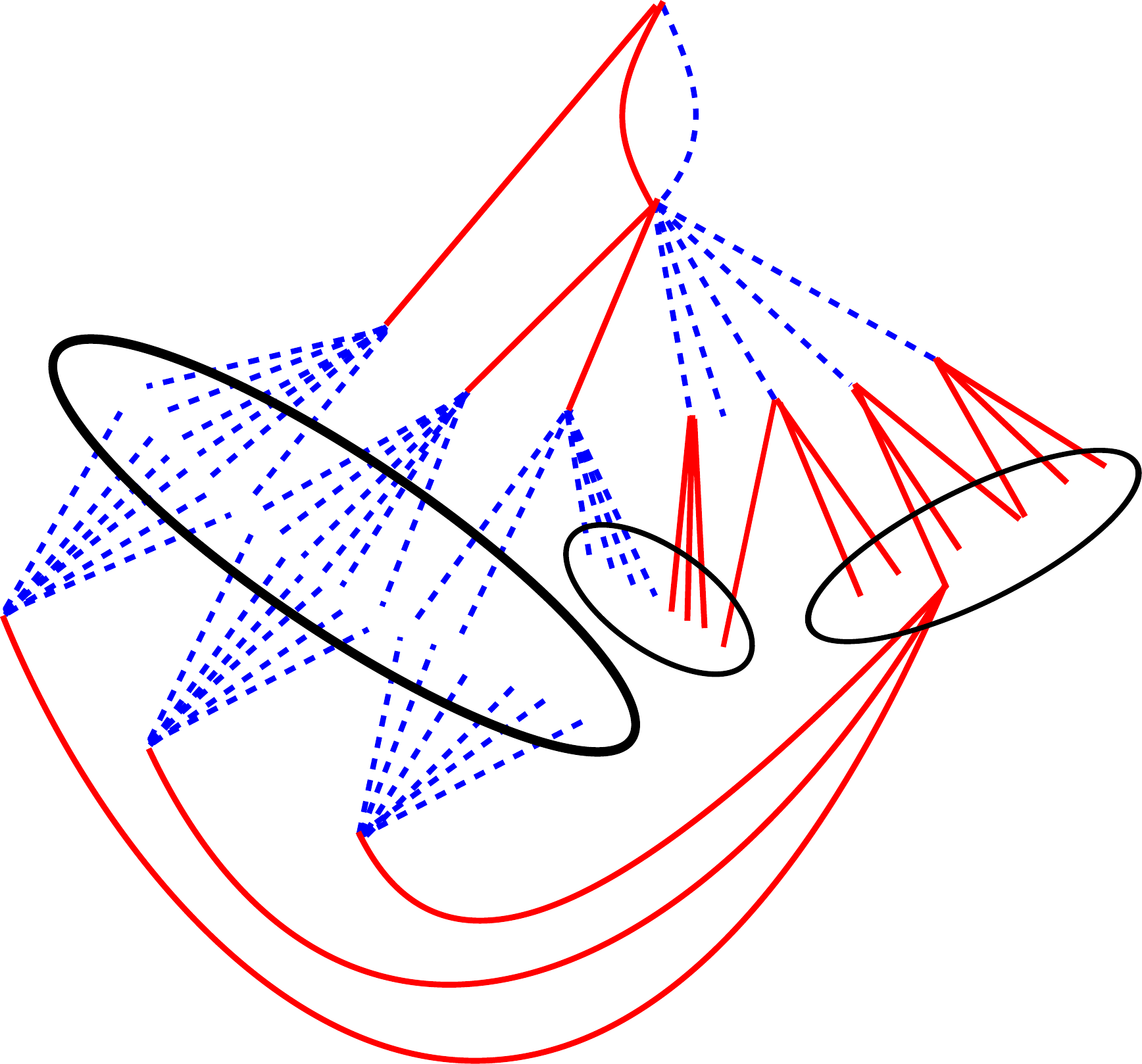}
    \put (55.5,87.5){ $v$}
    \put (55.3,78){ $u$}
    \put (84,41){$a$}
    \put (-8,60) { \parbox{.5\textwidth}{$B(u)$ \\
     \textit{small}}  }
    \put (96,41){ \parbox{.5\textwidth}{$A^*(u)$ \\ \textit{not very small}} }  
 \end{overpic}
\caption{A depiction of~\eqref{firstconsequence} in Case~\ref{case:blue3} of Theorem~\ref{thm:blue}.\label{fig:blue3first}}
\end{center}
\end{figure}

Next, suppose we would have that $|A^*(u)| \ge 1+ |N_2(u)| + |A^{**}(u)|$. Then there exists a vertex $a \in A^*(u) \setminus \left(N_2(u) \cup A^{**}(u)\right)$. By the definition of $A^{**}(u)$, this vertex satisfies $N_2(a) \cap N_2(u)= \emptyset$. Furthermore, since $a\in A^*(u)$, we have that for all $b \in B(u)$ there is a red--blue-link from $a$ to $b$. In other words, $B(u)=N_1(N_2(a)) \cap B(u)$. This implies that
$|B(u)|=|N_1(N_2(a)) \cap B(u)| \le |N_1(N_2(a)) \cap N_1(N_2(u))| \le  (t-1) \cdot \Delta_2^2$, where the last inequality is a consequence of the facts that $N_2(a) \cap N_2(u)= \emptyset$ and $G_1$ does not contain a copy of $K_{2,t}$. In summary, we have shown the implication
\begin{align}\label{protoimplicatie}
|A^*(u)| \ge 1+ |N_2(u)| + |A^{**}(u)| \implies  |B(u)| \le (t-1) \cdot \Delta_2^2. \end{align}
Combining~\eqref{astersteru} and~\eqref{protoimplicatie} yields our first desired main consequence~\eqref{firstconsequence}.

We now prove inequality~\eqref{secondconsequence}, the second consequence.
See Figure~\ref{fig:blue3second}.
First, the absence of blue copies of  $K_{2,t}$ implies that for every $x \in N_2(u) $ we have $|N_1(x) \cap N_1(u)| \le t-1$. Therefore 
\begin{align*}
| N_1(u) \cap N_1(N_2(u))| \le |N_2(u)| \cdot \max_{x \in N_2(u)} ( |N_1(x) \cap N_1(u)|) \le \Delta_2 \cdot (t-1).
\end{align*}
In other words, for at most $\Delta_2 \cdot (t-1)$ vertices $y \in N_1(u) $ there is a red--blue-link from $u$ to $y$. Recalling that there is a link from $u$ to every vertex (possibly with the exception of $v$), it follows that there are at least $h:=|N_1(u)| - (t-1) \Delta_2 -1$ vertices $y\in N_1(u)$ for which there is a blue--red-link (and no red--blue-link) from $u$ to $y$. In other words, $m:=|N_1(u) \cap N_2(N_1(u))| \ge h$. It follows from the definition of blue--red-link that any $y_1 \in N_1(u) \cap N_2(N_1(u))$ is connected to at least one other vertex $y_2 \in  N_1(u) \cap N_2(N_1(u))$  by a \textit{red edge}. If $m$ is even, this means that there exists a matching of $N_1(u)\cap N_2(N_1(u))$ consisting of red edges $y_1y_2, \ldots, y_{m-1}y_m$. Each of these edges has a large common red neighbourhood: for all $i\in \left\{ 1,3,5,\ldots, m-1\right\}$ it holds that $|N_2(y_1) \cup N_2(y_2)| = |N_2(y_1)| +| N_2(y_2)| - |N_2(y_1) \cap N_2(y_2)| \le \Delta_2 + \Delta_2 - (1-\epsilon) \Delta_2 = (1+\epsilon) \Delta_2$. So
\begin{align*}
|N_2 \left(  N_1(u) \cap N_2(N_1(u))   \right)| &= \left| \bigcup_{y \in N_1(u) \cap N_2(N_1(u))} N_2(y) \right| \\&\le \bigcup_{i \in \left\{  1,3, \ldots, m-1 \right\}} |N_2(y_i) \cup N_2(y_{i+1})| \le \frac{m}{2} \cdot \left(  1+ \epsilon \right) \cdot \Delta_2.
\end{align*}
If, on the other hand, $m$ is odd, then the same (or actually an even better) bound holds, because there exists a near-matching of $N_1(u)\cap N_2(N_1(u))$ with red edges $y_1y_2,\ldots, y_{m-4}y_{m-3}$ and a red $2$-path consisting of edges $y_{m-2}y_m$ and $y_{m-1}y_m$ satisfying $|N_2(y_{m-2}) \cup N_2(y_{m-1}) \cup N_2(y_{m})|  \le  |N_2(y_m)| + |N_2(y_{m-1}) \setminus N_2(y_m)| + |N_2(y_{m-2}) \setminus N_2(y_m)| \le (1+2 \epsilon) \Delta_2 \le \frac{3}{2} \cdot (1+\epsilon) \cdot \Delta_2$.

\begin{figure}
 \begin{center}
   \begin{overpic}[width=0.65\textwidth]{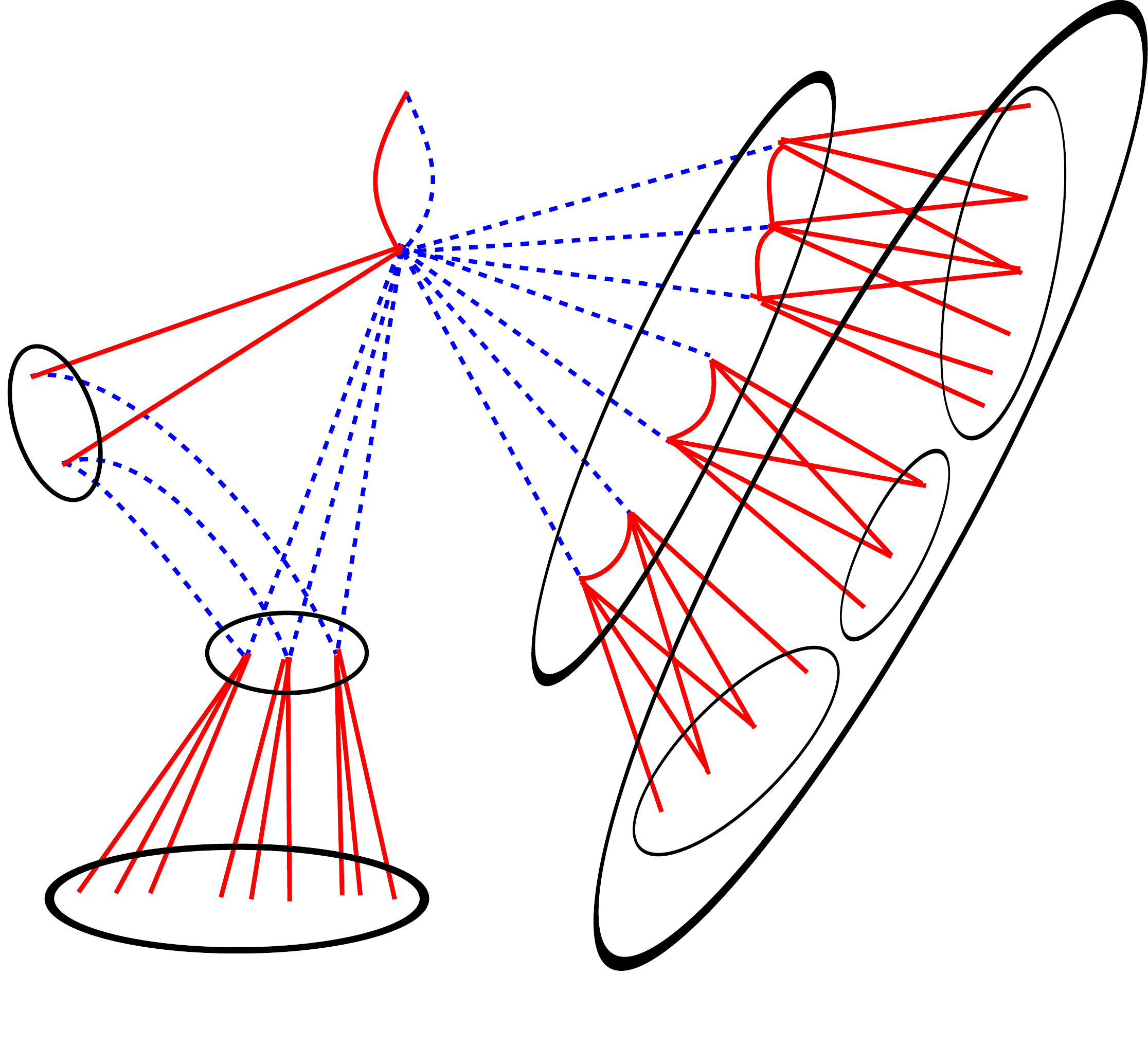}
    \put (33,78.5){ $v$}
    \put (33,71.5){ $u$}
    \put (-31.5,33) { \parbox{.5\textwidth}{{\small $N_1(u) \cap \left( N_1(N_2(u)) \backslash N_2(N_1(u)) \right)$ }\\
     \textit{very small}}  }
    \put (54,87){ $N_1(u) \cap N_2(N_1(u)))$  } 
    \put (81,33){\parbox{.5\textwidth}{{\small $N_2(N_1(u) \cap N_2(N_1(u)))$ }\\ \textit{small}}  } 
    \put (-7,2.5){\parbox{.5\textwidth}{{\small $N_2\left( N_1(u) \cap (N_1(N_2(u)) \backslash N_2(N_1(u))) \right)$ } \\ \textit{small}}  } 
    \put (59.5,22) {{\tiny small}}
    \put (76.5,46) {{\tiny small}}
    \put (86,70.5) {{\tiny small}}
 \end{overpic}
\caption{A depiction of~\eqref{secondconsequence} in Case~\ref{case:blue3} of Theorem~\ref{thm:blue}. \label{fig:blue3second}}
\end{center}
\end{figure}

Last, note that 
\begin{align*}
|N_1(u) \cap (N_1(N_2(u)) \setminus N_2(N_1(u)))| = |N_1(u)| - m - \mathbbm{1}_{\left\{  \nexists\text{ link from $u$ to $v$} \right\}} \le |N_1(u)| - m.
\end{align*}

We are now ready to derive~\eqref{secondconsequence}:
\begin{align*} 
|N_2(N_1(u))| 
&\le  |N_2 \left(  N_1(u) \cap N_2(N_1(u)) \right)| +   |N_2 \left(  N_1(u) \cap (N_1(N_2(u)) \setminus N_2(N_1(u)))   \right)| + |N_2(v)| \\
&\le  \frac{m}{2} \cdot (1+ \epsilon) \cdot \Delta_2  + \left(|N_1(u)| - m \right) \cdot \Delta_2  + \Delta_2 \quad =:   g(m).
\end{align*}

Since $\Delta_2 \ge 0$ and $\epsilon <1/2$, the function $g(x)$ is nonincreasing on the whole of $\mathbb{R}$. Since $h \le m$, it follows that $g(m) \le g(h)$. So 
\begin{align*}
|N_2(N_1(u))|
&\le g(|N_1(u)| - (t-1) \Delta_2 -1) \\
&= \frac{1+\epsilon}{2} \cdot (|N_1(u)| - (t-1) \cdot \Delta_2 -1) \cdot \Delta_2+ (t-1)\cdot \Delta_2^ 2 + 2 \Delta_2 \\
&\le \frac{1+\epsilon}{2} \cdot \Delta_1 \Delta_2 + \frac{1-\epsilon}{2} \cdot (t-1) \cdot \Delta_2^2 + \frac{3-\epsilon}{2} \cdot \Delta_2,
\end{align*}
as desired. 

Finally, we evaluate~\eqref{firstconsequence} and~\eqref{secondconsequence} in the bounds on $n$ given by Claim~\ref{clm:UpperBoundsN}, parts~\ref{clm:UpperBoundsN1} and~\ref{clm:UpperBoundsN2}, to obtain
\begin{align}\label{case3bound}
n &\le \min \left( |N_1(N_2(u))| + |A^*(u)| + |N_2(u)|, \ |N_2(N_1(u))|  + |N_2(u)| + |N_1^*(u)| + |B(u)| \right) \nonumber  \\
& \le  \min \left(\Delta_1 \Delta_2 + \Delta_2 + |A^*(u)|, \ \frac{1+ \epsilon}{2} \Delta_1 \Delta_2 + \frac{1- \epsilon}{2} (t-1) \Delta_2^2 + \left(t+ \frac{3-\epsilon}{2}\right) \cdot \Delta_2+ |B(u)|  \right) \nonumber  \\
&= \Delta_1 \Delta_2 + \Delta_2 + \min \left( |A^*(u)|, \ |B(u)| + \left(t+\frac{1-\epsilon}{2}\right)\cdot \Delta_2   -  \frac{1- \epsilon}{2}\left( \Delta_1 \Delta_2 -  (t-1) \Delta_2^2 \right)    \right) \nonumber \\
&= \Delta_1 \Delta_2 + \Delta_2 + \max \left(1+\Delta_2 + \frac{\epsilon \Delta_2}{1-2\epsilon}, \ \frac{3-\epsilon}{2} (t-1)\cdot  \Delta_2^2  -  \frac{1- \epsilon}{2} \Delta_1 \Delta_2  + \left(t+\frac{1-\epsilon}{2}\right)\cdot \Delta_2  \right),
\end{align}
where we employed~\eqref{firstconsequence} and~\eqref{secondconsequence} only in the last line.
\qed


\subsection{Proof of Theorem~\ref{thm:bluered}}

Suppose the theorem is false. Consider a critical counterexample, a pair of non-packable graphs $(G_1,G_2)$ satisfying the constraints of the theorem, such that there is a near-packing with a unique purple edge $uv$. We distinguish two cases, Cases~\ref{case:bluered1} and~\ref{case:bluered2}. From the first we derive the inequality~\eqref{case1bound_SB} and from the second we obtain the inequality~\eqref{case2bound_SB}. Together they contradict the condition that $\max(\eqref{eqn:bluered1},\eqref{eqn:bluered2}) < n$, thus proving the theorem.

\begin{enumerate}
\item \label{case:bluered1}
$|A^*(u)| \ge \alpha t \cdot \Delta_2(\Delta_2+1)$ or $|A^*(v)| \ge \alpha t \cdot \Delta_2(\Delta_2+1)$.
\end{enumerate}
Without loss of generality, we assume $|A^*(u)| \ge \alpha t \cdot \Delta_2(\Delta_2+1)$. From here the proof is the same as for Case~\ref{case:blue1} in the proof of Theorem~\ref{thm:blue}, leading to the same bound,
 \begin{align}\label{case1bound_SB} 
n \le \left(t+\frac{\alpha (\alpha-1)}{(\alpha-1)^2-\alpha} \right) \cdot \Delta_2 + \Delta_1 \Delta_2.\end{align}

\begin{enumerate}
\setcounter{enumi}{1}
\item \label{case:bluered2}
Case~\ref{case:bluered1} does not hold.
\end{enumerate}
From here we proceed almost exactly as for Case~\ref{case:blue2} in the proof of Theorem~\ref{thm:blue}, the difference being that instead of the upper bound $|N_2(u)\cap N_2(v)| < (1-\epsilon) \cdot \Delta_2$ we use $|N_2(u)\cap N_2(v)| < s$, which holds due to the additional condition $\Delta_2^\vartriangle < s$. (Compare with~\eqref{case2bound}.) It follows that 
\begin{align}\label{case2bound_SB}
n \le 2 \alpha t \cdot \Delta_2(\Delta_2+1) + 2\Delta_2 +  \Delta_1 \cdot (s-1) + \Delta_2^2 \cdot (t-1). \qedhere
\end{align}
\qed

\subsection{Concluding remarks}\label{sec:conclusion}

We wish to make the following remarks about Theorems~\ref{thm:blue} and~\ref{thm:bluered}.

\begin{itemize}
\item
In Theorem~\ref{thm:blue}, the bottleneck is the quantity~\eqref{eqn:blue2}, which corresponds to the bound~\eqref{case2bound} of Case~\ref{case:blue2}. So improving in this case would improve the overall bound on $n$, albeit not by much.
\item
The condition in Theorem~\ref{thm:bluered} that $\Delta_2^{\vartriangle}<s$ is equivalent to ``$|N_2(x) \cap N_2(y)| <s$ for all $xy \in E(G_2)$''. With a little adaptation, we can replace this by the weaker but perhaps obscure condition that $G_2$ has \textit{no} subgraph $G_2^!$ such that $|N_2(x) \cap N_2(y)| \ge s$ for \textit{all} $xy \in E(G_{2}^!)$. Indeed, this property is invariant under edge removal, and so holds for an edge-minimal critical counterexample, which therefore has an edge $uv$ with $|N(u)\cap N(v)| <s$, for which we can choose labellings such that $uv$ is the unique purple edge. From here, one again proceeds exactly as in Case~\ref{case:blue2} of the proof of Theorem~\ref{thm:blue}.
\item
Theorem~\ref{thm:bluered} yields a better bound than Theorem~\ref{thm:blue} only if $\Delta_1$ is much larger than $\Delta_2$ and $s$, $t$ are both small.
\item
By taking $G_2$ to be a collection of (nearly) equal-sized cliques, 
Corollary~\ref{cor:blue0} implies that, if $G$ is a $K_{2,t}$-free graph of maximum degree $\Delta$ with $\Delta \geq \sqrt{17 t} \cdot \sqrt{n}$, then the equitable chromatic number of $G$ is at most $\Delta$. Note that this result cannot be obtained by the result of Hajnal and Szmer\'edi on equitable colourings~\cite{HaSz70}.
\end{itemize}

The BEC conjecture notwithstanding, naturally one might wonder whether Theorem~\ref{thm:blue}, or rather Corollary~\ref{cor:blue}, could be improved according to a weaker form of the BEC condition, as was the case for $d$-degenerate $G_1$~\cite{BKN08}. In other words, it would be interesting to improve upon the $\Omega(\Delta_1\Delta_2)$ terms appearing in each of~\eqref{eqn:blue1}--\eqref{eqn:blue4}. We leave this to further study, but point out the following constructions where $G_1$ has low maximum codegree, which mark boundaries for this problem.
\begin{itemize}
\item
When $n$ is even, there are non-packable pairs $(G_1,G_2)$ of graphs where $G_1$ is a perfect matching (so $\Delta^\wedge_1 = 0$) and $2 \Delta_1 \Delta_2 = n$, cf.~\cite{KaKo07}.
\item
Bollob\'as, Kostochka and Nakprasit~\cite{BKN05} exhibited a family of non-packable pairs $(G_1, G_2)$ of graphs where $G_1$ is a forest (so $\Delta^\wedge_1 = 1$) and $\Delta_1 \ln \Delta_2 \ge cn$ for some $c>0$. 
\item
If $\Delta^\wedge(G) = 1$, then the chromatic number of $G$ satisfies $\chi(G) = O(\Delta(G)/\ln \Delta(G))$ as $\Delta(G)\to\infty$, and there are standard examples having arbitrarily large girth that show this bound to be sharp up to a constant factor, cf.~\cite[Ex.~12.7]{MoRe02}. Since the equitable chromatic number is at least the chromatic number, these examples moreover yield non-packable pairs $(G_1, G_2)$ of graphs having $\frac{\Delta_1}{\ln \Delta_1} (\Delta_2+1) \ge cn$ for some $c > 0$ and $\Delta^\wedge_1 = 1$.
\end{itemize}

Since the examples can also have the maximum adjacent codegree $\Delta^\vartriangle_1$ being zero, this last remark hints at another natural line to pursue, which could significantly extend both the result of Csaba~\cite{Csa07} and a result of Johansson~\cite{Joh96}. If $\Delta_1$ is large enough and $G_1$ is triangle-free, is some condition of the form $\frac{\Delta_1}{\ln \Delta_1} (\Delta_2+1) = cn$ for some constant $c>0$ sufficient for $G_1$ and $G_2$ to pack?

\bibliographystyle{abbrv}
\bibliography{packing}

\end{document}